\newcommand{\GF}{{\mathbb F}}
\newcommand{\R}{{\mathbb R}}
\newcommand{\wt}{{\rm wt}}
\newcommand{\supp}{{\rm supp}}
\DeclareMathOperator{\Harm}{Harm}
\newtheorem{Thm}{Theorem}[section]
\newtheorem{Lem}[Thm]{Lemma}
\theoremstyle{definition}
\newtheorem{Def}[Thm]{Definition}
\newtheorem{Rem}[Thm]{Remark}
\newtheorem{Conj}[Thm]{Conjecture}
\newcommand{\RR}{\mathbb{R}}
\newcommand{\ZZ}{\mathbb{Z}}
\newcommand{\NN}{\mathbb{N}}
\newcommand{\FF}{\mathbb{F}}
\begin{document}

\title[A note on the Assmus--Mattson theorem for some binary codes II]{
A note on the Assmus--Mattson theorem for some binary codes II
}

\author{Eiichi Bannai}
\address{Faculty of Mathematics, 
Kyushu University (emeritus), Japan}
\email{bannai@math.kyushu-u.ac.jp} 
\author{Tsuyoshi Miezaki}
\address{		Faculty of Science and Engineering, 
		Waseda University, 
		Tokyo 169--8555, Japan
}
\email{miezaki@waseda.jp} 
\author{Hiroyuki Nakasora*}
\thanks{*Corresponding author}
\address{Institute for Promotion of Higher Education, Kobe Gakuin University, Kobe
651--2180, Japan}
\email{nakasora@ge.kobegakuin.ac.jp}

\date{}

\maketitle

\begin{abstract}

Let $C$ be a four-weight binary code, which has all one vector. 
Furthermore, we assume that $C$ 
supports $t$-designs for all weights obtained 
from {the Assmus--Mattson theorem}. 
We previously showed that $t\leq 5$. 
In the present paper, 
we show an analogue of this result in the cases of five and six-weight codes. 

\end{abstract}


\noindent
{\small\bfseries Key Words and Phrases.}
Assmus--Mattson theorem, $t$-designs, harmonic weight enumerator.\\ \vspace{-0.15in}

\noindent
2010 {\it Mathematics Subject Classification}.
Primary 05B05;
Secondary 94B05, 20B25.\\ \quad

\setcounter{section}{+0}
\section{Introduction}

Let $D_{w}$ be the support design of a binary code $C$ for weight $w$ and
\begin{align*}
\delta(C)&:=\max\{t\in \mathbb{N}\mid \forall w, 
D_{w} \mbox{ is a } t\mbox{-design}\},\\ 
s(C)&:=\max\{t\in \mathbb{N}\mid \exists w \mbox{ s.t.~} 
D_{w} \mbox{ is a } t\mbox{-design}\}.
\end{align*}
We note that $\delta(C) \leq s(C)$. 
In the previous papers 
\cite{{extremal design H-M-N}, MMN, extremal design2 M-N,MN-tec, dual support designs}, 
we considered the possible occurrence of $\delta(C)<s(C)$.
This was motivated by Lehmer's conjecture, which is 
an analogue of $\delta(C)<s(C)$ in the theory of lattices 
and vertex operator algebras. 
For the details, 
see \cite{{BM1},{BM2},{BMY},{Lehmer},{Miezaki},{Miezaki2},{MMN},{mn-typeI},{Venkov},{Venkov2}}.

Let us explain our results. 
Throughout this paper, $C$ denotes a 
binary $[n,k,d]$ code and $\mathbf{1}_n\in C$. 
Let $C^{\perp}$ be a binary $[n,n-k,d^{\perp}]$ dual code of $C$. We set $C_u:=\{c\in C\mid \wt(c)=u\}$. 
Note that $d^{\perp}$ is even because $\mathbf{1}_n\in C$. 
We always assume that there exists $t\in \NN$ that
satisfies the following condition: 
\begin{align}
d^{\perp}-t=\sharp\{u\mid C_u \neq \emptyset, 0<u\leq n-t\}. \label{eqn:AM}
\end{align}
This is a condition of the Assmus--Mattson theorem 
(see Theorem \ref{thm:assmus-mattson}), and say the AM-condition.
Let $D_{u}$ and $D^{\perp}_{w}$ be the support designs of $C$ and $C^{\perp}$ 
for weights $u$ and $w$, respectively. 
Then, by (\ref{eqn:AM}) and Theorem \ref{thm:assmus-mattson}, 
$D_u$ and $D^{\perp}_w$ are $t$-designs 
(also $s$-designs for $0<s<t$)
for any $u$ and $w$, respectively. 

Let $C$ satisfy the AM-condition.
In the previous paper \cite{dual support designs}, 
for $1\leq d^{\perp}-t\leq 3$, 
we imposed some restrictions on $d^{\perp}$ and $t$.
The main results of the present paper are the following theorems. 
For $4\leq d^{\perp}-t\leq 5$, 
we also impose 
some restrictions on $d^{\perp}$ and $t$.
\begin{Thm}\label{thm:main1}
\begin{enumerate}
\item [{\rm (1)}]
If $C$ satisfies the AM-condition with $d^{\perp}-t=4$, 
then $(d^{\perp},t)=(6,2)$ or $(8,4)$.

\item [{\rm (2)}]
If $C$ satisfies the AM-condition with $d^{\perp}-t=5$, 
then $(d^{\perp},t)=(6,1), (8,3)$, or $(10,5)$.

\end{enumerate}
\end{Thm}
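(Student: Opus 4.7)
The plan is to follow the harmonic-weight-enumerator approach of the companion paper \cite{dual support designs}, which treated $1\leq d^{\perp}-t\leq 3$. For part~(1), the AM-condition with $d^{\perp}-t=4$ means that, together with weight $n$ coming from $\mathbf{1}_n$, the code $C$ is (at most) a five-weight code: it has four distinct nonzero weights in $(0,n-t]$, and the involution $c\mapsto c+\mathbf{1}_n$ pairs each weight $u$ with $n-u$. The first step is to enumerate how these four weights can distribute themselves relative to the interval $[t,n-t]$, and to write the multiplicities $|C_u|$ in terms of a small number of unknowns. Analogously for part~(2), with $d^{\perp}-t=5$, one deals with a six-weight code.

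Second, by Theorem~\ref{thm:assmus-mattson}, the AM-condition guarantees that every support design $D_u$ is a $t$-design. Consequently, for each harmonic polynomial $f$ of degree $k$ with $1\leq k\leq t$, the harmonic weight enumerator $W_{C,f}(x,y)$ vanishes identically. Applying the harmonic MacWilliams transform (as in \cite{dual support designs}) turns this vanishing into a system of linear relations on the numbers $|C_u|$; and since $C^{\perp}$ has minimum distance $d^{\perp}$, the low-order coefficients of $W_{C^{\perp},f}$ are forced to be zero as well. Together, these produce a highly overdetermined linear system in the few free parameters identified in Step~1.

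The third and decisive step is the case analysis: one must show that for each candidate $(d^{\perp},t)$ with $d^{\perp}-t\in\{4,5\}$ and $d^{\perp}$ strictly larger than the values listed in the theorem, the system produced by Step~2 admits no solution with non-negative integer multiplicities. For those remaining values that are not ruled out, the pair $(d^{\perp},t)$ is precisely one of $(6,2),(8,4)$ in part~(1) or $(6,1),(8,3),(10,5)$ in part~(2).

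The main obstacle I anticipate is the explicit algebraic work in Step~3. Each additional nonzero weight (compared with the four-weight case of \cite{MMN}) adds a free parameter, so one must invoke harmonic enumerators of several degrees $k$ simultaneously to produce enough independent relations; controlling the resulting polynomial identities in the weight variables, and extracting a uniform obstruction that rules out \emph{all} sufficiently large $d^{\perp}$, is likely to require a careful integrality/positivity argument in the spirit of \cite{dual support designs} rather than a purely formal manipulation.
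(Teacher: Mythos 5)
Your plan correctly identifies the overall shape of the argument (bound $d^{\perp}$ from above by showing no code exists when $d^{\perp}$ is large), but it has a genuine gap in both the mechanism of Step~2 and the substance of Step~3. In Step~2 you propose to extract relations on the multiplicities $|C_u|$ from the vanishing of the harmonic weight enumerators $W_{C,f}$ for $f\in\Harm_h$, $1\leq h\leq t$. Under the AM-condition these enumerators vanish \emph{identically} (that is exactly Delsarte's criterion, Theorem~\ref{thm:design}), so their MacWilliams transforms are also identically zero and yield only $0=0$; moreover their coefficients are sums $\sum_{c\in C_u}\widetilde f(c)$, not the counts $|C_u|$, so they could never constrain the weight distribution. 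The constraints the paper actually uses come from the \emph{ordinary} MacWilliams identity: writing $W_C$ with unknowns $\alpha,\beta$ (and $\gamma$ for the weight $n/2$ in part~(2)), the hypothesis $d^{\perp}\geq 10$ (resp.\ $\geq 12$) forces $A_2=A_4=A_6=A_8=0$ (resp.\ also $A_{10}=0$) in $2^{-k}W_C(x+y,x-y)$, giving polynomial equations in $\alpha,\beta,\gamma,n,d_1,d_2$. Harmonic enumerators enter only at degree $t+1$, and only for Theorem~\ref{thm:main2}.

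In Step~3 you anticipate ``a careful integrality/positivity argument,'' but the decisive obstruction in the paper is a specific Diophantine one: after eliminating $\alpha,\beta$ (via vanishing determinants) and discarding the parameter families excluded by $0<d_1<d_2<n/2$, the surviving solutions force $2^{k-1}=\sum_{i=0}^{4}\binom{n-1}{i}$ (resp.\ $\sum_{i=0}^{5}\binom{n-1}{i}$), and Lemmas~\ref{lem:eq} and~\ref{lem:eq2} determine all solutions of these equations by computing integral points on quartic models of elliptic curves (Magma's \texttt{IntegerQuarticPoints}) and by a $2$-adic argument; this is a nontrivial input that a generic positivity argument will not replace. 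Finally, your proposal never explains why the conclusion is the short list $(6,2),(8,4)$ rather than, say, $(7,3)$ or $(9,5)$: that step needs the observation that $d^{\perp}$ is even because $\mathbf{1}_n\in C$, which together with $t\geq 1$ and the bound $d^{\perp}\leq 8$ (resp.\ $\leq 10$) pins down the stated pairs.
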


For cases in which $d^{\perp}-t=4$ or $5$,
the following theorem provides a criterion for $n$ and $d$ such that 
$\delta(C^{\perp})<s(C^{\perp})$ occurs.
Let $d=d_{1}$ and $d_{2}$ be the second smallest weight of $C$.

\begin{Thm}\label{thm:main2}
\begin{enumerate}
\item [{\rm (1)}]
Let $C$ satisfy the AM-condition with $(d^{\perp},t)=(6,2), (8,4)$.
Let $w\in \NN$ such that 
\begin{align*} 
&\sum_{i=0}^{w}  (-1)^{w-i} \binom{d_{1}-(t+1)}{w-i}\binom{n-2d_{1}}{2i+1}\\
&-\frac{n-2d_{1}}{n-2d_{2}}\sum_{j=0}^{w}  (-1)^{w-j} \binom{d_{2}-(t+1)}{w-j}\binom{n-2d_{2}}{2j+1}
=0. 
\end{align*}
Then 
$D^{\perp}_{2w+t+2}$ is a $(t+1)$-design. 
Hence, we have $\delta(C^\perp)<s(C^\perp)$. 

\item [{\rm (2)}]
Let $C$ satisfy the AM-condition with $(d^{\perp},t)=(6,1), (8,3)$, or $(10,5)$.
Let $w\in \NN$ such that 
\begin{align*}
&\sum_{i=0}^{w}  (-1)^{w-i} \binom{d_{1}-(t+1)}{w-i}\binom{n-2d_{1}}{2i}  \\
&-\frac{(n-2d_{1})(n-2d_{1}-2)}{(n-2d_{2})(n-2d_{2}-2)} \sum_{j=0}^{w}  (-1)^{w-j} \binom{d_{2}-(t+1)}{w-j}\binom{n-2d_{2}}{2j} \\
&+\frac{8(d_{2}-d_{1})(n-d_{1}-d_{2}-1)}{(n-2d_{2})(n-2d_{2}-2)} (-1)^{w+1} \binom{n/2-(t+1)}{w} 
=0. 
\end{align*}
Then 
$D^{\perp}_{2w+t+1}$ is a $(t+1)$-design. 
Hence, we have $\delta(C^\perp)<s(C^\perp)$.

\end{enumerate}
\end{Thm}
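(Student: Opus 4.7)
The plan is to analyze $C$ via harmonic weight enumerators of degree $t+1$ and transfer the vanishing statement to $C^{\perp}$ through MacWilliams duality. For a discrete harmonic function $f$ of degree $k=t+1$, set
\[
W_{C,f}(x,y)=\sum_{c\in C} f(\supp(c))\,x^{n-\wt(c)}y^{\wt(c)}.
\]
By Bachoc's theorem, $D^{\perp}_{u}$ is a $(t+1)$-design iff the $y^{u}$-coefficient of $W_{C^{\perp},f}$ vanishes for every such $f$. The AM-condition already forces $W_{C,f}$ to be divisible by $(xy)^{t+1}$, so the problem reduces to checking the vanishing of a single higher coefficient.

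The few-weight hypothesis is what makes this computation explicit. In part (1), the nonzero weights of $C$ lie in $\{d_{1},d_{2},n-d_{2},n-d_{1},n\}$, and using $\mathbf{1}_{n}\in C$ together with the parity identity $f([n]\setminus S)=(-1)^{t+1}f(S)$, the harmonic weight enumerator reduces to a combination of only the two free parameters $a_{d_{1}}:=\sum_{c\in C_{d_{1}}} f(\supp c)$ and $a_{d_{2}}:=\sum_{c\in C_{d_{2}}} f(\supp c)$. In part (2) the nonzero weights are $\{d_{1},d_{2},n/2,n-d_{2},n-d_{1},n\}$: the central weight $n/2$ must appear because $d^{\perp}-t=5$ is odd while the other weights come in pairs $\{w,n-w\}$, and the extra parameter $a_{n/2}$ survives the parity constraint since $t+1$ is even for each pair $(d^{\perp},t)\in\{(6,1),(8,3),(10,5)\}$.

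Now apply the harmonic MacWilliams identity to produce $W_{C^{\perp},f}$ and expand the result in the binary Krawtchouk basis. The coefficient of $x^{n-u}y^{u}$ in $W_{C^{\perp},f}$ becomes a linear combination of $a_{d_{1}},a_{d_{2}}$ (and $a_{n/2}$ in part (2)) whose coefficients are Krawtchouk-type evaluations of the form $\sum_{i}(-1)^{w-i}\binom{d-(t+1)}{w-i}\binom{n-2d}{2i+\varepsilon}$ with $\varepsilon=1$ in part (1) and $\varepsilon=0$ in part (2). Using the relations forced by the divisibility of $W_{C^{\perp},f}$ by $(xy)^{t+1}$ and by the higher AM-vanishing, the parameters get eliminated: in part (1) one is left with the single ratio $(n-2d_{1})/(n-2d_{2})$ between $a_{d_{1}}$ and $a_{d_{2}}$, while in part (2) one obtains the ratio $(n-2d_{1})(n-2d_{1}-2)/((n-2d_{2})(n-2d_{2}-2))$ together with the residual contribution of $a_{n/2}$ carrying the factor $8(d_{2}-d_{1})(n-d_{1}-d_{2}-1)/((n-2d_{2})(n-2d_{2}-2))$ coming from the Krawtchouk value at $n/2$. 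Setting the resulting coefficient of $x^{n-(2w+t+2)}y^{2w+t+2}$ (resp.\ $x^{n-(2w+t+1)}y^{2w+t+1}$) to zero reproduces the displayed identities, and since $t+1>t=\delta(C^{\perp})$, the inequality $\delta(C^{\perp})<s(C^{\perp})$ follows immediately.

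The main obstacle will be the explicit bookkeeping in part (2): confirming that the central weight $n/2$ must appear, executing the three-parameter elimination cleanly, and verifying that the quadratic prefactors $(n-2d_{i})(n-2d_{i}-2)$ and the linear factor $8(d_{2}-d_{1})(n-d_{1}-d_{2}-1)$ emerge with the precise constants stated. The remaining calculations are systematic Krawtchouk manipulations that follow the template of the authors' earlier paper \cite{dual support designs}.
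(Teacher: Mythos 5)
Your plan is correct and follows essentially the same route as the paper: expand the degree-$(t+1)$ harmonic weight enumerator over the few weights of $C$, apply Bachoc's MacWilliams-type identity, eliminate the free parameters using the vanishing of the coefficients at weights below $d^{\perp}$ (weight $t+2$ in part (1); weights $t+1$ and $t+3$ in part (2), which is exactly where the ratios $-\tfrac{n-2d_1}{n-2d_2}$, $-\tfrac{(n-2d_1)(n-2d_1-2)}{(n-2d_2)(n-2d_2-2)}$ and the $a_{n/2}$-factor arise), and then read off the coefficient at weight $2w+t+2$ (resp.\ $2w+t+1$) and invoke Delsarte's criterion. Your observation that the middle-weight parameter survives precisely because $t+1$ is even in part (2) is consistent with the paper's computation, and the remaining bookkeeping you defer is exactly the binomial expansion the paper carries out.
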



This paper is organized as follows: 
In Section~\ref{sec:pre}, 
we provide background material and terminology.
We review the concept of 
harmonic weight enumerators and some theorems of designs, 
which are used in the proof of the 
main results. 
In Section~\ref{sec:4}, 
we provide proofs of the case $d^{\bot}-t=4$, 
Theorem \ref{thm:main1} (1), and Theorem \ref{thm:main2} (1). 
In Section~\ref{sec:5}, 
we provide proofs of the case $d^{\bot}-t=5$, 
Theorem \ref{thm:main1} (2), and Theorem \ref{thm:main2} (2). 
Finally, in Section~\ref{sec:rem}, 
we conclude the paper with some remarks. 

We performed all the computer calculations in this paper with the help of 
{\sc Magma} \cite{Magma} and 
{\sc Mathematica} \cite{Mathematica}.


\section{Preliminaries}\label{sec:pre}

\subsection{Background material and terminology}\label{sec:terminology}

A binary linear code $C$ of length $n$ is a subspace of $\GF_2^n$. 
An inner product $({x},{y})$ on $\FF_2^n$ is given 
by
\[
(x,y)=\sum_{i=1}^nx_iy_i,
\]
where $x,y\in \FF_2^n$ with $x=(x_1,x_2,\ldots, x_n)$ and 
$y=(y_1,y_2,\ldots, y_n)$. 
The dual of a linear code $C$ is defined as follows: 
\[
C^{\perp}=\{{y}\in \FF_{2}^{n}\ | \ ({x},{y}) =0\ \mbox{ for all }{x}\in C\}.
\]
A linear code $C$ is self-dual 
if $C=C^{\perp}$. 
For $x \in\FF_2^n$,
the weight $\wt(x)$ is the number of its nonzero components. 
The minimum distance of the code $C$ is 
$\min\{\wt( x)\mid  x \in C,  x \neq  0 \}$. 
A linear code of length $n$, dimension $k$, and 
minimum distance $d$ is called an $[n,k,d]$ code 
(or $[n,k]$ code) and 
the dual code is called an $[n,n-k,d^{\perp}]$ code. 
{

A $t$-$(v,k,{\lambda})$ design (or $t$-design, for short) is a pair 
$\mathcal{D}=(X,\mathcal{B})$, where $X$ is a set of points of 
cardinality $v$, and $\mathcal{B}$ is a collection of $k$-element subsets
of $X$ called blocks, with the property that any $t$ points are 
contained in precisely $\lambda$ blocks.


The support of a vector 
${x}:=(x_{1}, \dots, x_{n})$, 
$x_{i} \in \GF_{2}$ is 
the set of indices of its nonzero coordinates: 
${\rm supp} ({x}) = \{ i \mid x_{i} \neq 0 \}$\index{$supp (x)$}.
Let $\Omega:=\{1,\ldots,n\}$ and 
$\mathcal{B}(C_\ell):=\{\supp({x})\mid {x}\in C_\ell\}$. 
Then for a code $C$ of length $n$, 
we say that $C_\ell$ is a $t$-design if 
$\mathcal{D}_\ell=(\Omega,\mathcal{B}(C_\ell))$ is a $t$-design. 
We call $\mathcal{D}_\ell$ a support design of $C$.

The following theorem is from Assmus and Mattson \cite{assmus-mattson}. It is one of the 
most important theorems in coding theory and design theory:
\begin{Thm}[\cite{assmus-mattson}] \label{thm:assmus-mattson}
Let $C$ be a binary $[n,k,d]$ linear code and $C^{\bot}$ be the $[n,n-k,d^{\bot}]$ dual code. 
Let $t$ be an integer less than $d$.  
Let $C$ have at most $d^{\bot}-t$ non-zero weights less than or equal to $n-t$. Then, 
for each weight $u$ with $d \leq u \leq n-t$, the support design in $C$ is a $t$-design, 
and for each weight $w$ with $d^{\bot} \leq w \leq n$, the support design 
in $C^{\bot}$ is a $t$-design.

\end{Thm}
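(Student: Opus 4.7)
The proof I would write is the classical MacWilliams-duality argument of Assmus and Mattson. Fix a $t$-subset $T \subseteq \{1,\ldots,n\}$ and introduce the shortened code $C_T = \{c \in C : c_i = 0 \text{ for all } i \in T\}$, regarded as a code of length $n-t$. Its dual in $\FF_2^{n-t}$ is the punctured code $(C^{\perp})^T$. Two elementary observations drive the argument: every weight of $C_T$ is a weight of $C$ that is at most $n-t$, so by hypothesis $C_T$ has at most $d^{\perp} - t$ distinct nonzero weights; and puncturing at $t$ positions can drop the minimum weight by at most $t$, so every nonzero weight of $(C^{\perp})^T$ is at least $d^{\perp} - t$.

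Next, writing $W_{C_T}(x,y) = x^{n-t} + \sum_{i=1}^{s} A_{u_i}\,x^{n-t-u_i}y^{u_i}$ with $s \le d^{\perp} - t$, the MacWilliams identity expresses $W_{(C^{\perp})^T}$ as an explicit linear combination of the unknowns $A_{u_i}$. Imposing that the coefficients of $y, y^2,\ldots, y^{d^{\perp}-t-1}$ in $W_{(C^{\perp})^T}$ all vanish produces a linear system on the $A_{u_i}$ whose coefficient matrix has a specialized Vandermonde structure indexed by the weights $u_i$. Verifying this system is nonsingular pins down the $A_{u_i}$ in terms of $n$, $t$, and the weight set of $C$---data that do not depend on the particular $T$ chosen.

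Consequently $W_{C_T}$, and by MacWilliams also $W_{(C^{\perp})^T}$, are invariants of $|T|=t$. The invariance of $W_{C_T}$ says the number of codewords of $C$ of prescribed weight disjoint from $T$ is a function of $t$ alone, which translates via the standard inclusion-exclusion on the subset lattice into the $t$-design property of $\mathcal{B}(C_u)$ for $d \le u \le n-t$. Similarly, the invariance of $W_{(C^{\perp})^T}$ controls the joint distribution of $\wt(c)$ and $|T \cap \supp(c)|$ over $c \in C^{\perp}$, from which one extracts the $t$-design property of $\mathcal{B}(C^{\perp}_w)$ for all $d^{\perp} \le w \le n$.

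The main obstacle is the nonsingularity of the Vandermonde-type system in the middle step: this is where the numerical hypothesis ``at most $d^{\perp} - t$ nonzero weights'' is used in an essential way, and where a careless organization of the equations would lose the sharp dependence on $d^{\perp}-t$.
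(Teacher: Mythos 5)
The paper does not prove this statement: Theorem~\ref{thm:assmus-mattson} is quoted from Assmus and Mattson \cite{assmus-mattson} and used as a black box, so there is no in-paper argument to compare yours against. Your sketch is the classical shortening/puncturing proof and is essentially sound: $(C_T)^{\perp}=(C^{\perp})^{T}$, the shortened code $C_T$ has at most $d^{\perp}-t$ nonzero weights, the punctured dual has minimum weight at least $d^{\perp}-t$, and the resulting Krawtchouk (generalized Vandermonde) system forces the weight distribution of $C_T$ to be independent of $T$; the design property for $C$ then follows by inclusion--exclusion (note that $T$-independence for $|T|=t$ passes down to all $|S|\le t$ by double counting, so this step is fine).

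Two spots are thinner than they should be. First, as written your system has only $d^{\perp}-t-1$ equations (vanishing of the coefficients of $y^{1},\dots,y^{d^{\perp}-t-1}$) for up to $d^{\perp}-t$ unknowns $A_{u_i}$; you must add the constant-term equation $\sum_i A_{u_i}=|C_T|-1$ with $|C_T|=2^{k-t}$, and both this dimension count and the lower bound on the minimum weight of $(C^{\perp})^{T}$ quietly use $t<d^{\perp}$ (no nonzero dual codeword supported inside $T$), which should be extracted from the hypothesis. Second, on the dual side the $T$-independence of $W_{(C^{\perp})^{T}}$ only gives, for each $w'$, the invariance of $\sum_{j}\#\{c\in C^{\perp}:\wt(c)=w'+j,\ |T\cap\supp(c)|=j\}$, a sum mixing several weight classes of $C^{\perp}$; ``extracting'' the per-weight design property is a genuine (if standard) triangular induction on $w$ starting at $w=d^{\perp}$, where only the term $j=t$ survives, and this is precisely where the restriction $d^{\perp}\le w$ enters. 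With those two steps made explicit the argument is complete. You might also note that the paper's own toolkit gives an alternative proof in the same spirit: for $f\in\Harm_h$ with $1\le h\le t$, Theorem~\ref{thm: Bachoc iden.} shows $Z_{C,f}$ has at most $d^{\perp}-h$ potentially nonzero coefficients while $Z_{C^{\perp},f}$ has its first $d^{\perp}-h$ coefficients forced to vanish, whence $W_{C,f}=W_{C^{\perp},f}=0$ and Theorem~\ref{thm:design} applies; this is exactly the mechanism the paper exploits in Sections~\ref{sec:4} and~\ref{sec:5}.
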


\subsection{Harmonic weight enumerators}\label{sec:weight enumerators}

In this subsection, we review the concept of 
harmonic weight enumerators. 

Let $C$ be a code of length $n$. The weight distribution of the code $C$ 
is the sequence $\{A_{i}\mid i=0,1, \dots, n \}$, where $A_{i}$ is the number of codewords of weight $i$. 
The polynomial
$$W_C(x ,y) = \sum^{n}_{i=0} A_{i} x^{n-i} y^{i}$$
is called the weight enumerator of $C$.
The weight enumerator of the code $C$ and its dual $C^{\perp}$ are related. 
The following theorem, proposed by MacWilliams, is called the MacWilliams identity:
\begin{Thm}[\cite{mac}]\label{thm: macwilliams iden.} 
Let $W_C(x ,y)$ be the weight enumerator of an $[n,k]$ code $C$ over $\GF_{q}$ and let $W_{C^\perp}(x ,y)$ be 
the weight enumerator of the dual code $C^\perp$. Then
$$W_{C^\perp} (x ,y)= q^{-k} W_C(x+(q-1)y,x-y).$$
\end{Thm}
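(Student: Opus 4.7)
The proof will use Bachoc's harmonic weight enumerator machinery. For each $f \in \Harm_{t+1}$, set $A_{u,f} := \sum_{c \in C_u} f(\supp c)$ and $W_{C,f}(x,y) := \sum_u A_{u,f}\,x^{n-u}y^u$; Bachoc's theorem gives $W_{C,f}(x,y) = (xy)^{t+1}Z_{C,f}(x,y)$ for a polynomial $Z_{C,f}$, and $D^{\perp}_{w'}$ is a $(t+1)$-design precisely when $A^\perp_{w',f} = 0$ for every such $f$. Because $\mathbf{1}_n \in C$, the identity $f(X \setminus T) = (-1)^{t+1}f(T)$ for harmonic $f$ of degree $t+1$ forces $A_{n-u,f} = \epsilon\, A_{u,f}$, where $\epsilon := (-1)^{t+1}$. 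In case~(1) ($t$ even) we have $\epsilon = -1$ and any weight-$n/2$ contribution is automatically zero, so $W_{C,f}$ is determined by the two unknowns $A_{d_1,f}, A_{d_2,f}$. In case~(2) ($t$ odd) we have $\epsilon = +1$ and $A_{n/2,f}$ is a genuine third unknown, so $W_{C,f}$ depends on three parameters.

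Next I apply Bachoc's MacWilliams-type identity, which expresses $Z_{C^\perp,f}(x,y)$ as a constant multiple of $Z_{C,f}(x+y,x-y)$. The key combinatorial step is the factorisation
\[
(x+y)^{n-d-(t+1)}(x-y)^{d-(t+1)} \pm (x+y)^{d-(t+1)}(x-y)^{n-d-(t+1)} = (x^2-y^2)^{d-(t+1)}\bigl[(x+y)^{n-2d} \pm (x-y)^{n-2d}\bigr],
\]
with ``$-$'' in case~(1) and ``$+$'' in case~(2), combined with the fact that $(x+y)^{n-2d} \pm (x-y)^{n-2d}$ equals twice the odd or even part of the usual binomial expansion. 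Extracting the coefficient of $x^{n-2(t+1)-m'}y^{m'}$ at the target index $m' = 2w+1$ (case~(1)) or $m' = 2w$ (case~(2)) then produces exactly the alternating binomial sums appearing in the statement: $\sum_i (-1)^{w-i}\binom{d-(t+1)}{w-i}\binom{n-2d}{2i+1}$ from each $A_{d_i,f}$ in case~(1), and $\sum_i (-1)^{w-i}\binom{d-(t+1)}{w-i}\binom{n-2d}{2i}$ in case~(2), together with the extra middle contribution $(-1)^w\binom{n/2-(t+1)}{w}$ carried by $A_{n/2,f}$.

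The final step is to pin down the one-dimensional locus of admissible unknowns. I impose $A^\perp_{w,f} = 0$ for each $w \in [t+1,\,d^{\perp}-1]$, the range forbidden by the minimum distance of $C^\perp$; after parity filtering this yields exactly one non-trivial equation in case~(1) (from $w = t+2$) and two equations in case~(2) (from $w = t+1$ and $w = t+3$), cutting the parameter space down to a single ray. Solving these explicitly determines $A_{d_2,f}$ (and $A_{n/2,f}$) as specific rational multiples of $A_{d_1,f}$; in case~(1) the ratio is $-(n-2d_1)/(n-2d_2)$, matching the theorem exactly. Substituting these relations into $A^\perp_{2w+t+2,f}=0$ (case~(1)) or $A^\perp_{2w+t+1,f}=0$ (case~(2)) and using the fact that $A_{d_1,f}$ can be chosen nonzero for generic $f \in \Harm_{t+1}$ collapses the vanishing to the displayed scalar equation. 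The ``hence'' clause then follows because this furnishes a $(t+1)$-design at a weight where the Assmus--Mattson theorem supplies only a $t$-design, forcing $\delta(C^\perp) < s(C^\perp)$. The main obstacle will be the case-(2) algebra: combining the two AM relations at $w = t+1$ and $w = t+3$ so as to match the precise rational coefficients $\tfrac{(n-2d_1)(n-2d_1-2)}{(n-2d_2)(n-2d_2-2)}$ and $\tfrac{8(d_2-d_1)(n-d_1-d_2-1)}{(n-2d_2)(n-2d_2-2)}$ of the theorem's display, which will rely on polynomial identities such as $(n-2d_1)(n-2d_1-2) - (n-2d_2)(n-2d_2-2) = 4(d_2-d_1)(n-d_1-d_2-1)$ together with careful bookkeeping of the factor-of-two asymmetry between the paired weights $(d_i, n-d_i)$ and the singleton weight $n/2$.
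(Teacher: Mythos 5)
There is a fundamental mismatch here: what you have written is not a proof of the statement in question. The statement is the classical MacWilliams identity, $W_{C^\perp}(x,y)=q^{-k}W_C(x+(q-1)y,\,x-y)$, which the paper simply quotes from \cite{mac} without proof. Your text never engages with this identity at all; instead it sketches a proof of Theorem \ref{thm:main2}, the paper's result on when $D^{\perp}_{2w+t+2}$ (resp.\ $D^{\perp}_{2w+t+1}$) is a $(t+1)$-design, complete with the ratios $-(n-2d_1)/(n-2d_2)$ and the alternating binomial sums from that theorem's display. Whatever its merits as an outline of that other argument, it establishes nothing about the relation between $W_C$ and $W_{C^\perp}$.

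Even if one tried to read your text as somehow bearing on the MacWilliams identity, it would be circular: your central tool is Bachoc's MacWilliams-type equality for harmonic weight enumerators (Theorem \ref{thm: Bachoc iden.}), which is a generalization of --- and at degree $h=0$ over $\GF_2$ specializes to --- exactly the identity you were asked to prove. A genuine proof of Theorem \ref{thm: macwilliams iden.} must be independent of that machinery. The standard route is via a nontrivial additive character $\chi$ of $\GF_q$ and the orthogonality relation stating that $\sum_{c\in C}\chi((c,v))$ equals $|C|$ if $v\in C^{\perp}$ and $0$ otherwise, applied to the double sum $\sum_{c\in C}\sum_{v\in\GF_q^{n}}\chi((c,v))\,x^{n-\wt(v)}y^{\wt(v)}$ evaluated in two ways. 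You should supply an argument of that kind (or explicitly defer to \cite{mac}, as the paper itself does) rather than the design-theoretic computation you have given.
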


A striking generalization of the MacWilliams identity 
was provided by Bachoc \cite{Bachoc}, 
who proposed the concept of harmonic weight enumerators and 
a generalization of the MacWilliams identity. 
Harmonic weight enumerators have many applications; 
in particular, the relations between coding theory and 
design theory are reinterpreted and progressed by harmonic weight 
enumerators \cite {Bachoc,Bannai-Koike-Shinohara-Tagami}. 
For the reader's convenience, we quote 
the definitions and properties of discrete harmonic functions from \cite{Bachoc,Delsarte}. 

Let $\Omega=\{1, 2,\ldots,n\}$ be a finite set (which is the set of coordinates of the code) and 
let $X$ be the set of its subsets, where for all $h= 0,1, \ldots, n$, $X_{h}$ is the set of its $h$-subsets.
Let $\R X$ and $\R X_h$ denote the free real vector spaces spanned by the elements of $X$ and $X_{h}$, respectively. 
An element of $\R X_h$ is denoted by
$$f=\sum_{z\in X_h}f(z)z$$
and is identified with the real-valued function on $X_{h}$ given by 
$z \mapsto f(z)$. 

Such an element $f\in \R X_h$ can be extended to an element $\widetilde{f}\in \R X$ by setting, for all $u \in X$,
$$\widetilde{f}(u)=\sum_{z\in X_h, z\subset u}f(z).$$
If an element $g \in \R X$ is equal to some $\widetilde{f}$, for $f \in \R X_{h}$, we say that $g$ has degree $h$. 
The differentiation $\gamma$ is the operator defined by linearity from 
$$\gamma(z) =\sum_{y\in X_{h-1},y\subset z}y$$
for all $z\in X_h$ and for all $h=0,1, \ldots n$, and $\Harm_{h}$ is the kernel of $\gamma$:
$$\Harm_h =\ker(\gamma|_{\R X_h}).$$

\begin{Thm}[{{\cite[Theorem 7]{Delsarte}}}]\label{thm:design}
A set $\mathcal{B} \subset X_{m}$, where $m \leq n$ of blocks is a $t$-design 
if and only if $\sum_{b\in \mathcal{B}}\widetilde{f}(b)=0$ 
for all $f\in \Harm_h$, $1\leq h\leq t$. 
\end{Thm}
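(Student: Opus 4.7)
The plan is to translate the hypothesis into a statement about the block-multiplicity functions and then invoke the harmonic decomposition of $\R X_h$. For $h\le m$ and $y\in X_h$, set $\mu_h(y):=\#\{b\in\mathcal{B}\mid y\subset b\}$, so that $\mathcal{B}$ is a $t$-design exactly when $\mu_h$ is constant on $X_h$ for every $h\le t$. A swap of summation gives
\begin{equation*}
\sum_{b\in\mathcal{B}}\widetilde{f}(b)
=\sum_{b\in\mathcal{B}}\sum_{\substack{z\in X_h\\ z\subset b}}f(z)
=\sum_{z\in X_h}\mu_h(z)f(z)=\langle\mu_h,f\rangle,
\end{equation*}
so the theorem reduces to the equivalence between $\mu_h\perp\Harm_h$ for all $1\le h\le t$ and $\mu_h$ being constant for all $1\le h\le t$.

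For the direction ($\Rightarrow$), if $\mu_h\equiv\lambda_h$ then $\langle\mu_h,f\rangle=\lambda_h\sum_{z\in X_h}f(z)$, and I would show that $\sum_z f(z)=0$ for every $f\in\Harm_h$ with $h\ge 1$ by iterating $\gamma$. A quick induction gives $\gamma^{k}(z)=k!\sum_{u\subset z,\,|u|=h-k}u$, so $\gamma^{h}(f)=h!\bigl(\sum_z f(z)\bigr)\emptyset$; since $f\in\ker\gamma$ forces $\gamma^{h}(f)=0$, the claim follows.

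For the converse, I would use the orthogonal decomposition
\begin{equation*}
\R X_h\;=\;\bigoplus_{j=0}^{h}\widetilde{\Harm_j}^{(h)},
\end{equation*}
where $\widetilde{\Harm_j}^{(h)}\subset\R X_h$ denotes the image of $\Harm_j$ under the lifting $f\mapsto\widetilde{f}|_{X_h}$; the top summand is $\Harm_h$ itself and the bottom summand is the line of constant functions on $X_h$. The key bookkeeping identity, proved by counting for a fixed $y\in X_j$ the triples $(z,b)$ with $y\subset z\subset b$, $|z|=h$, $|b|=m$, is
\begin{equation*}
\langle\mu_h,\widetilde{f}\rangle=\binom{m-j}{h-j}\,\langle\mu_j,f\rangle
\qquad(f\in\Harm_j,\ 0\le j\le h).
\end{equation*}
Since the binomial is nonzero whenever $h\le m$, the condition $\mu_j\perp\Harm_j$ at level $j$ is equivalent to $\mu_h\perp\widetilde{\Harm_j}^{(h)}$ inside $\R X_h$. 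Feeding in the hypothesis for every $1\le j\le h\le t$, we see that $\mu_h$ is orthogonal to each summand of positive degree, so it lies in $\widetilde{\Harm_0}^{(h)}$ and is therefore constant.

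The main obstacle is the harmonic decomposition of $\R X_h$ together with the lifting identity; both are standard consequences of the representation theory of the Johnson scheme but deserve explicit verification, while the remainder of the argument is pure bookkeeping via double counting.
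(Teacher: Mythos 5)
The paper does not prove this statement; it is quoted verbatim as Theorem 7 of Delsarte and used as a black box, so there is no in-paper argument to compare against. Your reconstruction is correct and is essentially Delsarte's own route: the reduction to $\langle\mu_h,f\rangle$, the vanishing of $\sum_{z}f(z)$ for harmonic $f$ via $\gamma^h(f)=h!\bigl(\sum_z f(z)\bigr)\emptyset=0$, the double-counting identity $\langle\mu_h,\widetilde{f}\rangle=\binom{m-j}{h-j}\langle\mu_j,f\rangle$, and the harmonic decomposition of $\R X_h$ are all sound. Two points you correctly flag but should make explicit if writing this out in full: (i) the equivalence ``$\mathcal{B}$ is a $t$-design iff $\mu_h$ is constant for every $h\le t$'' rests on the standard fact that a $t$-design is an $h$-design for all $h\le t$ (itself a double count, essentially the $j=h$, varying-$h$ case of your identity with $\mathcal{B}$ in place of $X_m$); and (ii) the direct-sum decomposition $\R X_h=\bigoplus_{j=0}^{h}\widetilde{\Harm_j}^{(h)}$ requires the lifting maps $\Harm_j\to\R X_h$ to be injective, which holds in the range $j\le h\le n-j$ relevant here; note also that orthogonality of the summands is not strictly needed, since the constants lie in the orthogonal complement of $\bigoplus_{j\ge1}\widetilde{\Harm_j}^{(h)}$ by part (i) of your argument and a dimension count then finishes the converse.
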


In \cite{Bachoc}, the harmonic weight enumerator associated with a binary linear code $C$ was defined as follows: 
\begin{Def}
Let $C$ be a binary code of length $n$ and let $f\in\Harm_{h}$. 
The harmonic weight enumerator associated with $C$ and $f$ is

$$W_{C,f}(x,y)=\sum_{{c}\in C}\widetilde{f}({c})x^{n-\wt({c})}y^{\wt({c})}.$$
\end{Def}

Bachoc proved the following MacWilliams-type equality: 
\begin{Thm}[\cite{Bachoc}] \label{thm: Bachoc iden.} 
Let $W_{C,f}(x,y)$ be 
the harmonic weight enumerator associated with the code $C$ 
and the harmonic function $f$ of degree $h$. Then 
$$W_{C,f}(x,y)= (xy)^{h} Z_{C,f}(x,y),$$
where $Z_{C,f}$ is a homogeneous polynomial of degree $n-2h$, and satisfies
$$Z_{C^{\bot},f}(x,y)= (-1)^{h} \frac{2^{n/2}}{|C|} Z_{C,f} \left( \frac{x+y}{\sqrt{2}}, \frac{x-y}{\sqrt{2}} \right).$$
\end{Thm}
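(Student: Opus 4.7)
The plan is to prove the factorization $W_{C,f}(x,y) = (xy)^h Z_{C,f}(x,y)$, then derive the transformation law for $Z_{C,f}$ via a weighted MacWilliams identity obtained from discrete Poisson summation.

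\emph{Factorization.} It suffices to show $\widetilde{f}(c) = 0$ whenever $\wt(c) < h$ or $\wt(c) > n - h$. The first case is automatic because $\supp(c)$ then contains no $h$-subset. For the second, I write $\supp(c) = \Omega \setminus u$ with $|u| < h$ and expand by inclusion--exclusion,
$$\widetilde{f}(c) = \sum_{z \,:\, z \cap u = \emptyset} f(z) = \sum_{u' \subset u} (-1)^{|u'|} \sum_{z \supset u'} f(z).$$
Standard iteration of $\gamma$ (giving $\gamma^k z = k!\sum_{w \subset z,\,|w|=h-k} w$) together with $f \in \ker\gamma$ shows $\sum_{z \supset u'} f(z) = 0$ whenever $|u'| < h$, so every inner sum vanishes.

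\emph{Weighted MacWilliams.} I apply the discrete Poisson summation identity
$$\sum_{c \in C^\perp} g(c) = \frac{1}{|C|} \sum_{c \in C} \widehat{g}(c), \qquad \widehat{g}(v) := \sum_{u \in \FF_2^n} (-1)^{(u,v)} g(u),$$
to $g(u) = \widetilde{f}(u)\, x^{n - \wt(u)} y^{\wt(u)}$. Substituting $\widetilde{f}(u) = \sum_{z \subset \supp(u)} f(z)$, swapping the order of summation, and evaluating the inner sum over $u$ with $\supp(u) \supset z$ coordinate by coordinate yields
$$\widehat{g}(v) = y^h (x+y)^{n-h-\wt(v)}(x-y)^{\wt(v)} \sum_z f(z)\, \alpha^{|v \cap z|}, \qquad \alpha := -\frac{x+y}{x-y}.$$
The crucial reduction is the identity $\sum_z f(z)\, \alpha^{|v \cap z|} = (\alpha - 1)^h\, \widetilde{f}(v)$ for $f \in \Harm_h$: expand $\alpha^{|v \cap z|} = \sum_{S \subset v \cap z} (\alpha-1)^{|S|}$, swap sums to obtain $\sum_{S \subset v} (\alpha-1)^{|S|} \sum_{z \supset S} f(z)$, and invoke harmonicity so that only the term $|S|=h$ (with $S \subset v$) survives. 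Since $\alpha - 1 = -2x/(x-y)$, one obtains
$$\widehat{g}(v) = (-1)^h (2xy)^h (x+y)^{n-h-\wt(v)}(x-y)^{\wt(v)-h}\, \widetilde{f}(v).$$

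\emph{Conclusion.} Feeding this into Poisson summation and recognising
$$\sum_{c \in C} \widetilde{f}(c)\, (x+y)^{n-h-\wt(c)}(x-y)^{\wt(c)-h} = Z_{C,f}(x+y,\, x-y)$$
gives $(xy)^h Z_{C^\perp,f}(x,y) = \tfrac{(-1)^h (2xy)^h}{|C|}\, Z_{C,f}(x+y, x-y)$. Cancelling $(xy)^h$ and using homogeneity of $Z_{C,f}$ (degree $n-2h$) to rewrite $Z_{C,f}(x+y, x-y) = 2^{(n-2h)/2}\, Z_{C,f}\bigl((x+y)/\sqrt{2},\,(x-y)/\sqrt{2}\bigr)$ produces the stated formula. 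The main obstacle is the harmonicity reduction identity for $\sum_z f(z)\,\alpha^{|v \cap z|}$; once that is in hand, the rest is routine Fourier/Poisson bookkeeping together with a homogeneity rescaling.
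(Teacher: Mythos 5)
The paper does not prove this statement: it is quoted verbatim from Bachoc's paper \cite{Bachoc} as a known result, so there is no in-paper argument to compare yours against. Judged on its own, your proof is correct and complete. The factorization step is right: $\widetilde{f}(c)=0$ for $\wt(c)<h$ is trivial, and for $\wt(c)>n-h$ your inclusion--exclusion over subsets $u'$ of the complement reduces everything to sums $\sum_{z\supset u'}f(z)$ with $|u'|<h$, which vanish because harmonicity gives $\sum_{z\supset y}f(z)=0$ for $|y|=h-1$ and a double-counting argument (each $z\supset u'$ is counted $h-|u'|$ times over the $(h-1)$-sets $y$ with $u'\subset y\subset z$) propagates this down to all smaller $u'$. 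The Poisson-summation computation is also correct: I checked the coordinatewise evaluation giving $y^h(-1)^{|v\cap z|}(x+y)^{n-h-\wt(v)+|v\cap z|}(x-y)^{\wt(v)-|v\cap z|}$, the reduction $\sum_z f(z)\,\alpha^{|v\cap z|}=(\alpha-1)^h\widetilde{f}(v)$ (only $|S|=h$ survives since $S\subset z$ forces $|S|\le h$ and harmonicity kills $|S|<h$), and the bookkeeping $\alpha-1=-2x/(x-y)$ leading to $Z_{C^\perp,f}(x,y)=(-1)^h\frac{2^h}{|C|}Z_{C,f}(x+y,x-y)$, which the degree-$(n-2h)$ homogeneity converts into the stated normalization with $2^{n/2}$. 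The intermediate expressions involve negative powers of $x-y$, but these cancel in the end precisely because $\widetilde{f}(v)=0$ outside $h\le\wt(v)\le n-h$, so the final identity is a genuine polynomial identity. This is a clean, self-contained route to Bachoc's theorem via the discrete Fourier transform on $\FF_2^n$, in the same spirit as the standard proof of the ordinary MacWilliams identity.
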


\section{Case $d^{\perp}-t=4$}\label{sec:4}
In this section, we always assume that $d^{\perp}-t=4$. 
Then the weights of $C$ are $0,d_{1}, d_{2}, n-d_{2}, n-d_{1}, n$.
Before providing the proof, we show the following lemma, 
which will be used in the proof of Theorem \ref{thm:d^perp>10}: 
\begin{Lem}\label{lem:eq}
Let $n,k\in \ZZ_{\geq 0}$. 
The solutions of the following equation
\begin{align*}
\sum_{i=0}^4\binom{n-1}{i}=
\frac{1}{24}(24 - 18 n + 23 n^2 - 6 n^3 + n^4)=2^k
\end{align*}
are as follows: 
\[
(n,k)=(0,0),(1,0),(2,1),(3,2),(4,3),(5,4),(10,8). 
\]
\end{Lem}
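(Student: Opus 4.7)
\medskip

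\textbf{Approach.} The equation to solve is $P(n)=2^k$ where
\[
P(n) \;=\; \frac{1}{24}\bigl(n^4 - 6n^3 + 23n^2 - 18n + 24\bigr) \;=\; \sum_{i=0}^{4}\binom{n-1}{i}.
\]
My plan is to separate the ``structural'' small solutions (coming from a combinatorial identity) from the sporadic one at $n=10$, and then rule out all remaining $n$ by modular obstructions backed up with a finite computational check.

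\medskip

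\textbf{Main steps.} For $1\le n\le 5$ the truncation at $i=4$ is vacuous, so $P(n)=\sum_{i=0}^{n-1}\binom{n-1}{i}=2^{n-1}$, which together with the formula value $P(0)=1$ gives the six solutions $(n,k)=(0,0),(1,0),(2,1),(3,2),(4,3),(5,4)$. For $n\in\{6,7,8,9\}$ direct computation yields $P(n)\in\{31,57,99,163\}$, none a power of $2$, and for $n=10$ I verify $P(10)=256=2^8$. For $n\ge 11$, I will use the identity
\[
n^4-6n^3+23n^2-18n+24 \;=\; n(n-1)(n^2-5n+18)+24,
\]
which recasts $P(n)=2^k$ as
\[
n(n-1)(n^2-5n+18) \;=\; 24(2^k-1).
\]
Since $2^k-1$ is odd this forces $v_2\bigl(n(n-1)(n^2-5n+18)\bigr)=3$, cutting down $n$ modulo small powers of $2$. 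In parallel, computing $P(n)$ modulo $3$, $7$, and $11$ shows that each of the residue classes $n\equiv 7,8\pmod 9$, $n\equiv 6\pmod 7$, and $n\equiv 8\pmod{11}$ yields a value of $P(n)$ not attained by any $2^k$ modulo the respective modulus (e.g., $P(n)\equiv 0\pmod 3$ in the first case, $P(n)\equiv 3\pmod 7$ in the second), excluding those progressions.

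\medskip

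\textbf{Main obstacle.} The crux is turning the finite, modular picture into a proof valid for all $n\ge 11$: no finite set of moduli can by itself exclude every large $n$, and a pure size estimate cannot preclude a coincidental equality $P(n)=2^k$ at a single large value. The cleanest route is either (i) an effective bound from the theory of $S$-unit equations / linear forms in $2$-adic logarithms, capping $n$ a priori so that a \textsc{Mathematica} sweep (in the spirit of the paper's computational practice) finishes the check, or (ii) a direct $2$-adic analysis of $n(n-1)(n^2-5n+18)=24(2^k-1)$ within each arithmetic progression surviving the sieve, exploiting $\gcd(n,\,n^2-5n+18)\mid 18$ and $\gcd(n-1,\,n^2-5n+18)\mid 14$ to pin down the odd part of the left-hand side and extract a prime factor incompatible with $2^k-1$. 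Making either argument uniform over the infinite residual family, rather than leaving an uncovered tail, is the most delicate part of the proof.
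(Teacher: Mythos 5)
Your proposal does not actually close the argument for $n\ge 11$: the modular sieve over a finite set of moduli can only exclude certain arithmetic progressions, and you acknowledge yourself that an ``uncovered tail'' of residue classes remains. The two routes you sketch in the final paragraph (linear forms in $2$-adic logarithms, or a $2$-adic/gcd analysis of $n(n-1)(n^2-5n+18)=24(2^k-1)$) are left as proposals rather than carried out, so as written the proof has a genuine gap precisely at its hardest point. The factorization $n(n-1)(n^2-5n+18)+24$ and the observation $v_2(\cdot)=3$ are fine as far as they go, but they do not by themselves bound $n$ or pin down the odd part of the left-hand side uniformly.

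The idea you are missing is the paper's reduction to an integral-point problem on a quartic curve, which is effectively and \emph{completely} solvable. Split on the parity of $k$: if $k$ is even, write $2^k=y^2$; if $k$ is odd, write $2^k=2y^2$. Clearing the denominator $24$ and setting $Y=12y$ turns the equation into
\[
Y^2=6\,(n^4-6n^3+23n^2-18n+24)
\qquad\text{or}\qquad
Y^2=3\,(n^4-6n^3+23n^2-18n+24),
\]
respectively (in the second case the paper substitutes $n=X+2$ so that the curve has the visible rational point $(0,\pm 12)$). Each is a hyperelliptic quartic, whose integral points can be determined in full by standard effective methods; the paper invokes {\sc Magma}'s \texttt{IntegerQuarticPoints} to list them, and reading off which points give a power of $2$ yields exactly the stated list $(n,k)=(0,0),(1,0),(2,1),(3,2),(4,3),(5,4),(10,8)$. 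This is essentially a concrete, finished version of your option (i): instead of an a priori bound from linear forms in logarithms followed by a sweep, the parity trick converts the exponential Diophantine equation into two polynomial Diophantine equations of a type for which complete solvers exist. Your verification of the small cases $0\le n\le 10$ and the identification of $P(n)=2^{n-1}$ for $1\le n\le 5$ are correct and consistent with the paper, but they are the easy part.
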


\begin{proof}
We assume that $k\equiv 0\pmod{2}$ and $y=2^{k/2}$. 
Then 
\begin{align*}
&\frac{1}{24}(24 - 18 n + 23 n^2 - 6 n^3 + n^4)=y^2\\
\Leftrightarrow 
&6\times (24 - 18 n + 23 n^2 - 6 n^3 + n^4)=6\times 24\times y^2. 
\end{align*}
Let $Y=12y$. Then 
\[
6\times (24 - 18 n + 23 n^2 - 6 n^3 + n^4)=Y^2. 
\]
By "IntegerQuarticPoints" (a command in {\sc Magma}), 
we obtain the solutions $(n,Y)$: 
\begin{center}
$(-12,456)$,
$(10,-192)$,
 $(5,48)$,
 $(3,-24)$,
 $(36,2928)$,\\
 $(1,12)$,
 $(-2,-36)$,
 $(0,-12)$,
 $(-237,139344)$. 
\end{center}
If 
\[
\frac{1}{24}(24 - 18 n + 23 n^2 - 6 n^3 + n^4)=2^k
\]
is a power of $2$, then 
$n=10,5,3,1,0$, 
$k=8,4,2,0,0$, respectively. 

We assume that $k\equiv 1\pmod{2}$, $y=2^{(k-1)/2}$. 
Then 
\begin{align*}
&\frac{1}{24}(24 - 18 n + 23 n^2 - 6 n^3 + n^4)=2y^2\\
\Leftrightarrow 
&3\times (24 - 18 n + 23 n^2 - 6 n^3 + n^4)=3\times 24\times 2y^2. 
\end{align*}
Let $Y=12y$. Then 
\[
3\times (24 - 18 n + 23 n^2 - 6 n^3 + n^4)=Y^2. 
\]
Let $n=X+2$. Then
\[
(144 +102 X + 33 X^2 +6 X^3 + 3X^4)=Y^2. 
\]
By "IntegerQuarticPoints" (a command in {\sc Magma}), 
we obtain the solutions $(X,Y)$: 
\begin{center}
$(2,-24)$, $(0,-12)$. 
\end{center}
We have $n=4,2$. 
If 
\[
\frac{1}{24}(24 - 18 n + 23 n^2 - 6 n^3 + n^4)=2^k 
\]
is a power of $2$, then 
$n=4,2$, 
$k=3,1$, respectively. 
\end{proof}

\subsection{Proof of Theorem \ref{thm:main1} (1)}

In this subsection, 
we provide the proof of Theorem \ref{thm:main1} (1).
Let 
\[
W_{C} (x,y)=x^{n}+\alpha x^{n-d_{1}}y^{d_{1}}+\beta x^{n-d_{2}}y^{d_{2}}+\beta x^{d_{2}}y^{n-d_{2}}+\alpha x^{d_{1}}y^{n-d_{1}}
+y^{n}
\]
be the weight enumerator of $C$. We remark that 
\[
1+\alpha+\beta=2^{k-1}. 
\]

We show that if $d^\perp\geq 10$, then 
we have the constraints Eqs.~(\ref{eq:A2}).
By Theorem \ref{thm: macwilliams iden.}, 
\begin{align*}
W_{C^\perp} (x ,y)=& 2^{-k} W_C(x+y,x-y) \\
=&2^{-k}\sum_{i\geq 0}A_ix^{n-2i}y^{2i}.
\end{align*}
If $d^\perp\geq 10$, then 
we have the following constraints: 
\begin{align}\label{eq:A2}
A_2=
A_4=
A_6=
A_8=0. 
\end{align}



Using Eqs.~(\ref{eq:A2}), we show the following theorem:
\begin{Thm}\label{thm:d^perp>10}
There is no code $C$ with $d^{\perp}\geq 10$.
\end{Thm}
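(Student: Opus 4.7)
The plan is to combine the four vanishing conditions~(\ref{eq:A2}) with the normalization $1+\alpha+\beta = 2^{k-1}$ to derive a Diophantine constraint on $(n,k)$, and then eliminate the finitely many candidates via Lemma~\ref{lem:eq}.

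First, I would apply the MacWilliams identity (Theorem~\ref{thm: macwilliams iden.}) to
\[
W_C(x,y) = x^n + y^n + \alpha\bigl(x^{n-d_1}y^{d_1} + x^{d_1}y^{n-d_1}\bigr) + \beta\bigl(x^{n-d_2}y^{d_2} + x^{d_2}y^{n-d_2}\bigr)
\]
and extract the coefficient of $x^{n-2j}y^{2j}$ in $W_{C^{\perp}}$. Writing this coefficient in terms of the Krawtchouk polynomial $K_{2j}$ and using the symmetry $K_{2j}(n-i) = K_{2j}(i)$, which follows from $\mathbf{1}_n \in C$, the conditions in~(\ref{eq:A2}) translate into
\[
\binom{n}{2j} + \alpha K_{2j}(d_1) + \beta K_{2j}(d_2) = 0, \qquad j = 1,2,3,4.
\]

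Next, I would treat the $j=1,2$ equations as a linear system for $(\alpha,\beta)$ and solve by Cramer's rule, obtaining $\alpha,\beta$ as rational functions of $n, d_1, d_2$. Substituting back into the $j=3,4$ equations and into $\alpha+\beta = 2^{k-1}-1$ leaves three polynomial relations in $n, d_1, d_2, k$. Eliminating $d_1$ and $d_2$ with {\sc Mathematica} should collapse the system, after simplification, to the clean identity
\[
\sum_{i=0}^{4}\binom{n-1}{i} \;=\; \frac{1}{24}\bigl(n^4 - 6n^3 + 23n^2 - 18n + 24\bigr) \;=\; 2^k.
\]
Lemma~\ref{lem:eq} then restricts $(n,k)$ to the finite list $(0,0),(1,0),(2,1),(3,2),(4,3),(5,4),(10,8)$. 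For $n \le 9$, no vector of $\GF_2^n$ has weight at least $10$, so $d^{\perp} \ge 10$ fails immediately. For $(n,k) = (10,8)$ we would have $|C^{\perp}| = 2^{n-k} = 4$; but the only vector of $\GF_2^{10}$ of weight $\geq 10$ is $\mathbf{1}_{10}$, forcing $C^{\perp} \subseteq \{\mathbf{0},\mathbf{1}_{10}\}$ and hence $|C^{\perp}| \le 2$, a contradiction.

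The hardest step will be the elimination. It is not a priori obvious that the four Krawtchouk equations, together with the single normalization, collapse precisely onto $\sum_{i=0}^{4}\binom{n-1}{i} = 2^k$. One can view the identity as a Hamming-type bound for $C^{\perp}$ attained with equality along a shortening, which explains why the answer is so rigid, but extracting this form from the raw MacWilliams relations amounts to a multivariate resultant computation whose tractability rests on choosing the right linear combinations of the $j=3,4$ relations.
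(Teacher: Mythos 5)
Your proposal follows essentially the same route as the paper: translate $A_2=A_4=A_6=A_8=0$ into an overdetermined linear system for $(\alpha,\beta)$, eliminate $d_1,d_2$ by computer algebra, reduce to the Diophantine identity $1+\alpha+\beta=\sum_{i=0}^{4}\binom{n-1}{i}=2^{k-1}$, and invoke Lemma~\ref{lem:eq}. Two caveats. First, the elimination does not ``collapse'' in one step: the paper obtains a list of several dozen solution families for $(n,d_1,d_2)$ from the vanishing of two $2\times 2$ determinants, discards most of them using $0<d_1<d_2<n/2$ and $n>5$, and only on the few surviving families does $1+\alpha+\beta$ evaluate to the quartic $\frac{1}{24}(n^4-6n^3+23n^2-18n+24)$; you should expect the same case analysis rather than a single clean resultant.

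Second, and more concretely, your disposal of the one surviving case is wrong as written. In Lemma~\ref{lem:eq} the exponent $k$ is a generic power of $2$, and the relation from the code is $1+\alpha+\beta=2^{k_{\mathrm{code}}-1}$; so $(n,k_{\mathrm{lemma}})=(10,8)$ corresponds to code dimension $9$, giving $|C^{\perp}|=2^{10-9}=2$, not $4$. Your claimed contradiction $|C^{\perp}|\le 2<4$ therefore evaporates: $C^{\perp}=\{\mathbf{0},\mathbf{1}_{10}\}$ is perfectly consistent with $d^{\perp}=10$. The case must instead be killed by noting that then $C$ is the $[10,9,2]$ even-weight code, so $d=2$, while the AM-condition with $d^{\perp}-t=4$ forces $t=6$, contradicting the Assmus--Mattson hypothesis $t<d$. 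With that repair (and the $2^{k}$ versus $2^{k-1}$ bookkeeping fixed), your argument matches the paper's.
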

\begin{proof}
We assume that $C$ has $d^{\perp}\geq 10$. 
Using Eqs.~(\ref{eq:A2}), 
we delete their constant terms as follows: 
\begin{align*}
\binom{n}{4}A_2
-\binom{n}{2} A_4=0
\Leftrightarrow X_{1}\alpha+Y_{1}\beta=0,\\
\binom{n}{6} A_2
-\binom{n}{2} A_6=0
\Leftrightarrow X_{2}\alpha+Y_{2}\beta=0,\\
\binom{n}{8} A_2
-\binom{n}{2} A_8=0
\Leftrightarrow X_{3}\alpha+Y_{3}\beta=0,
\end{align*}
where $X_{i}\ (1\leq i\leq 3)$ and ~$Y_{i}\ (1\leq i\leq 3)$ are 
the coefficients of $\alpha$ and ~$\beta$, respectively. 
Let 
\begin{align*}
M_1=
\begin{pmatrix}
X_1&Y_1\\
X_2&Y_2
\end{pmatrix}, 
M_2=
\begin{pmatrix}
X_1&Y_1\\
X_3&Y_3
\end{pmatrix}. 
\end{align*}
By 
$\alpha\neq 0$ and
$\beta\neq 0$, 
\[
\det(M_1)=0, 
\det(M_2)=0. 
\]
Then using {\sc Mathematica} \cite{Mathematica}, 
we obtain the solutions, which are listed on the homepage of
one of the authors \cite{miezaki}.

Because 
$0<d_1<d_2<n/2$ and $5<n$, 
the solutions (1a)--(20a), (23a), (24a), (27a), and (28a) 
in \cite{miezaki} are impossible. 
We show that (25a) in \cite{miezaki} is impossible. 
The other cases (21a),(22a), and (26a) can be proved similarly. 

Using Eqs.~(\ref{eq:A2}), 
and (25a) in \cite{miezaki}, 
using {\sc Mathematica}, we obtain 
\[
1+\alpha+\beta=\frac{1}{24}(24 - 18 n + 23 n^2 - 6 n^3 + n^4). 
\]
By Lemma \ref{lem:eq}, 
\[(n,k)=(0,0),(1,0),(2,1),(3,2),(4,3),(5,4),(10,8),\] and 
it is clear that these cases 
are impossible. 
\end{proof}

\begin{proof}[Proof of Theorem \ref{thm:main1} (1)]
By Theorem \ref{thm:d^perp>10}, 
$d^\perp \leq 8$. Hence,
$(d^\perp, t) = (6, 2)$ or $(8, 4)$.
\end{proof}


\subsection{Proof of Theorem \ref{thm:main2} (1)}

In this subsection, 
we provide the proof of Theorem \ref{thm:main2} (1). 
\begin{proof}
The harmonic weight enumerator of $f\in \Harm_{t+1}$ is 
\begin{align*}
W_{C,f}
=&a_{1}x^{n-d_{1}}y^{d_{1}}+ a_{2}x^{n-d_{2}}y^{d_{2}} 
+ b_{2}x^{d_{2}}y^{n-d_{2}} +b_{1}x^{d_{1}}y^{n-d_{1}}\\
=(xy)^{t+1}&(a_{1}x^{n-d_{1}-(t+1)}y^{d_{1}-(t+1)}+ a_{2}x^{n-d_{2}-(t+1)}y^{d_{2}-(t+1)} \\
&+b_{2}x^{d_{2}-(t+1)}y^{n-d_{2}-(t+1)} +b_{1}x^{d_{1}-(t+1)}y^{n-d_{1}-(t+1)}), 
\end{align*}
where $a_{1},a_{2},b_{1}, b_{2}\in \RR$. 
We set
\begin{align*}
Z_{C,f}=a_{1}x^{n-d_{1}-(t+1)}y^{d_{1}-(t+1)}+ a_{2}x^{n-d_{2}-(t+1)}y^{d_{2}-(t+1)} \\
+b_{2}x^{d_{2}-(t+1)}y^{n-d_{2}-(t+1)} +b_{1}x^{d_{1}-(t+1)}y^{n-d_{1}-(t+1)}. 
\end{align*}

Then, by Theorem \ref{thm: Bachoc iden.}, 
\begin{align*}
Z_{C^{\perp},f}
= & a'_{1}(x+y)^{n-d_{1}-(t+1)}(x-y)^{d_{1}-(t+1)}\\
&+ a'_{2}(x+y)^{n-d_{2}-(t+1)}(x-y)^{d_{2}-(t+1)} \\
& +b'_{2}(x+y)^{d_{2}-(t+1)}(x-y)^{n-d_{2}-(t+1)}\\
& +b'_{1}(x+y)^{d_{1}-(t+1)}(x-y)^{n-d_{1}-(t+1)}.
\end{align*}
Since the coefficient of $x^{n-2t-2}$ in $Z_{C^{\perp},f}$ is zero, 
$a'_{1}+b'_{1}=0$ and $a'_{2}+b'_{2}=0$.

By $d^{\perp} \neq t+2$, the coefficient of $x^{n-2t-3}y$ in $Z_{C^{\perp},f}$ is zero. 
Then, \begin{align*}
&a'_{1}(n-d_{1}-(t+1)-(d_{1}-(t+1)))\\
&+a'_{2}(n-d_{2}-(t+1)-(d_{2}-(t+1)))=0.
\end{align*}
Hence, 
\[
a'_{2}=-\frac{n-2d_{1}}{n-2d_{2}}a'_{1}.
\] 
Then,
\begin{align}\label{eqn:w3} 
Z_{C^{\perp},f}
= & a'_{1} \big( (x+y)^{n-d_{1}-(t+1)}(x-y)^{d_{1}-(t+1)}\\  \notag
&-\frac{n-2d_{1}}{n-2d_{2}} (x+y)^{n-d_{2}-(t+1)}(x-y)^{d_{2}-(t+1)} \\ \notag
& +\frac{n-2d_{1}}{n-2d_{2}}(x+y)^{d_{2}-(t+1)}(x-y)^{n-d_{2}-(t+1)}\\ \notag
& -(x+y)^{d_{1}-(t+1)}(x-y)^{n-d_{1}-(t+1)} \big) \\ \notag
= & a'_{1} \big( (x^{2}-y^{2})^{d_{1}-(t+1)}(x+y)^{n-2d_{1}}\\ \notag
&-\frac{n-2d_{1}}{n-2d_{2}} (x^{2}-y^{2})^{d_{2}-(t+1)}(x+y)^{n-2d_{2}} \\ \notag
& +\frac{n-2d_{1}}{n-2d_{2}}(x^{2}-y^{2})^{d_{2}-(t+1)}(x-y)^{n-2d_{2}} \\\notag
&-(x^{2}-y^{2})^{d_{1}-(t+1)}(x-y)^{n-2d_{1}} \big). 
\end{align}

Let 
\[
W_{C^\perp,f}=(xy)^{t+1}Z_{C^{\perp},f}=\sum{p_i}x^{n-i}y^i.
\]
Recall that if $p_{2w+t+2}=0$ then $D^{\perp}_{2w+t+2}$ is a $(t+1)$-design. 
By (\ref{eqn:w3}), 
\begin{align*}
p_{2w+t+2}=&{\rm (constant)}\\
\times &
\Bigg( \sum_{i=0}^{w}  (-1)^{w-i} \binom{d_{1}-(t+1)}{w-i}\binom{n-2d_{1}}{2i+1} \\
&-\frac{n-2d_{1}}{n-2d_{2}}\sum_{j=0}^{w}  (-1)^{w-j} \binom{d_{2}-(t+1)}{w-j}\binom{n-2d_{2}}{2j+1} \Bigg).
\end{align*}
By Theorem \ref{thm:design}, 
if the equation 
\begin{align*}
& \sum_{i=0}^{w}  (-1)^{w-i} \binom{d_{1}-(t+1)}{w-i}\binom{n-2d_{1}}{2i+1}\\
&-\frac{n-2d_{1}}{n-2d_{2}}\sum_{j=0}^{w}  (-1)^{w-j} \binom{d_{2}-(t+1)}{w-j}\binom{n-2d_{2}}{2j+1} =0, 
\end{align*}
$D^{\perp}_{2w+t+2}$ is a $(t+1)$-design. 
\end{proof}

\section{Case $d^{\perp}-t=5$}\label{sec:5}
In this section, we always assume that $d^{\perp}-t=5$. 
Then the weights of $C$ are $0,d_1, d_2, n/2, n-d_2, n-d_1, n$.
Before providing the proof, we show the following lemma, 
which will be used in the proof of Theorem \ref{thm:d^perp>12}: 
\begin{Lem}\label{lem:eq2}
Let $n,m\in \ZZ_{\geq 0}$. 
The solutions of the following equation
\begin{align*}
\sum_{i=0}^5\binom{n-1}{i}
=\frac{1}{120}(184 n - 110 n^2 + 55 n^3 - 10 n^4 + n^5)=2^m
\end{align*}
are as follows: 
\[(n,m)=(1,0),(2,1),(3,2),(4,3),(5,4),(6,5),(12,10).\] 
\end{Lem}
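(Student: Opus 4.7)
The plan is to adapt the argument of Lemma~\ref{lem:eq} to the quintic setting, splitting into the two cases $m \equiv 0,1 \pmod{2}$ and in each case reducing the Diophantine equation to finding integral points on a fixed hyperelliptic curve of genus~$2$. Set $f(n) := 184n - 110n^2 + 55n^3 - 10n^4 + n^5$, so that the equation to solve reads $f(n) = 120 \cdot 2^m$.

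In the case $m = 2r$, set $y = 2^r$, so that $f(n) = 120\,y^2$; multiplying both sides by $30$ and putting $Y := 60y$ yields the integer equation
\[
Y^2 = 30\,f(n).
\]
In the case $m = 2r+1$, set $y = 2^r$, so that $f(n) = 240\,y^2$; multiplying by $15$ and again putting $Y := 60y$ yields
\[
Y^2 = 15\,f(n).
\]
Each of these defines a hyperelliptic curve of genus~$2$ over $\QQ$, on which one seeks integer points with $n \geq 0$.

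To enumerate the integral points on these two curves, I would invoke {\sc Magma}'s integral-points routine applied to the corresponding \texttt{HyperellipticCurve}; this reduces, via a Chabauty--Coleman type argument, to a Mordell--Weil computation on the Jacobian. Once the finite list of candidate integer values of $n$ has been produced from both curves, substituting each back into $\tfrac{1}{120}f(n)$ and retaining only those for which the result is a non-negative power of $2$ should yield the claimed list $(n,m) \in \{(1,0),(2,1),(3,2),(4,3),(5,4),(6,5),(12,10)\}$.

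The main obstacle, compared with Lemma~\ref{lem:eq}, is the jump from genus~$1$ (where \texttt{IntegerQuarticPoints} is effective and reliable) to genus~$2$: the integral-point computation is correspondingly more delicate and requires that the Mordell--Weil rank of each Jacobian be small enough for Chabauty--Coleman to apply. If this rank condition holds in practice for both auxiliary curves, then the remainder of the proof is a routine finite verification against the predicted list of solutions.
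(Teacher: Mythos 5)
The reduction of each parity class of $m$ to integral points on a genus-$2$ hyperelliptic curve ($Y^2=30f(n)$ for $m$ even, $Y^2=15f(n)$ for $m$ odd, with $Y=60\cdot 2^{\lfloor m/2\rfloor}$) is sound in principle and is the natural analogue of the quartic trick used for Lemma~\ref{lem:eq}. But the step you defer to---``invoke {\sc Magma}'s integral-points routine for the corresponding hyperelliptic curve''---is exactly where the proof is missing. Unlike the genus-$1$ situation, where \texttt{IntegerQuarticPoints} is an unconditional turnkey command, {\sc Magma} has no general integral-points routine for genus-$2$ curves: one must first \emph{provably} compute the Mordell--Weil group of the Jacobian (the rank bound from $2$-descent need not be sharp, and generators can be hard to certify), and then run either Chabauty--Coleman (which needs rank $\le 1$ and in any case returns rational points, from which integral ones must be extracted) or a Baker-bound-plus-Mordell--Weil-sieve argument. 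You explicitly leave the rank hypothesis unverified, so as written the argument is conditional and does not establish the lemma.

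The paper takes an entirely different, elementary route (due to Alekseyev): setting $S(n,5)=\sum_{i=0}^{5}\binom{n}{i}$, one has the factorization $5!\,S(n,5)=(n+1)\,g_5(n)$ with $g_5(n)=n^4-6n^3+31n^2-26n+120$, so $S(n,5)=2^m$ forces $n+1\in\{2^k,\ 3\cdot 2^k,\ 5\cdot 2^k,\ 15\cdot 2^k\}$. Substituting each form into $g_5$ and reducing modulo $2^7$ (using $184=2^3\cdot 23$) shows the $2$-adic valuation of $g_5$ is pinned down once $k>5$, which bounds $k$ and leaves a finite check. To repair your proposal you should either carry out the genus-$2$ computation in full---verifying the rank condition and certifying the Mordell--Weil generators for both auxiliary curves---or switch to this factorization-and-congruence argument, which is unconditional and computationally much lighter.
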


\begin{proof}
The following argument was made by 
Professor ~Max Alekseyev 
in \cite{mathoverflow}. 
We seek that the integer solutions of 
\[
S(n,5)=\sum_{i=0}^5\binom{n}{i}=2^m. 
\]
Then, $5!S(n,5)=(n+1)g_5(n)$, 
where 
\[
g_5(n)=120 - 26 n + 31 n^2 - 6 n^3 + n^4. 
\]
Hence, 
\[
n+1=2^k, 3\times 2^k, 5\times 2^k, 15\times 2^k. 
\]
First, we assume that $n+1=2^k$. Then 
\begin{align*}
g_5(2^k-1)&=184 + 31\times 2^{2 k} + 2^{4 k} - 9\times 2^{1 + k} - 2^{2 + k}\\
&- 11 \times 2^{3 + k} 
+  3\times  2^{3 + 2 k} - 3\times  2^{1 + 3 k} - 2^{2 + 3 k}=15\times 2^\ell. 
\end{align*}
We note that $184=2^3\times 23$.

If $k\leq 5$, then 
we obtain the solution $(n,\ell)=(0,3),(1,3),(3,4)$. 
If $k>5$, then considering modulo $2^7$, 
the following equation has no solutions: 
\begin{align*}
g_5(2^k-1)&=184 + 31\times 2^{2 k} + 2^{4 k} - 9\times 2^{1 + k} - 2^{2 + k}\\
&- 11 \times 2^{3 + k} 
+  3\times  2^{3 + 2 k} - 3\times  2^{1 + 3 k} - 2^{2 + 3 k}=15\times 2^3. 
\end{align*}

For the other cases $n+1=3\times 2^k, 5\times 2^k, 15\times 2^k$, 
we obtain the solutions similarly as follows: 
\[
(n,\ell)=(0,3),(1,3),(2,5),(3,4),(4,7),(5,7),(11,11). 
\]
\end{proof}

\subsection{Proof of Theorem \ref{thm:main1} (2)}

In this subsection, 
we provide the proof of Theorem \ref{thm:main1} (2).
Let 
\[
W_{C} (x,y)=x^{n}+\alpha x^{n-d_1}y^{d_1}+\beta x^{n-d_2}y^{d_2}
+\gamma x^{\frac{n}{2}}y^{\frac{n}{2}}+\beta x^{d_2}y^{n-d_2}+\alpha x^{d_1}y^{n-d_1}+y^{n}
\]
be the weight enumerator of $C$. 
We remark that 
\[
1+\alpha+\beta+\frac{\gamma}{2}=2^{k-1}. 
\]
First, we show that 
if $d^\perp\geq 12$, then 
we have the constraints Eqs.~(\ref{eq:z2}).
By Theorem \ref{thm: macwilliams iden.},
\begin{align*}
W_{C^\perp} (x ,y)=& 2^{-k} W_C(x+y,x-y) \\
=&\sum_{i\geq 0}A_ix^{n-2i}y^{2i}.
\end{align*}
If the coefficient of $x^{n-2i}y^{2i}$ $(1\leq i\leq 5)$ 
in $W_{C^\perp} (x ,y)$ is zero, 
then 
\begin{align}\label{eq:z2}
A_2=
A_4=
A_6=
A_8=
A_{10}=0.
\end{align}

Therefore, 
if $d^\perp\geq 12$, then 
we have the constraints Eqs.~(\ref{eq:z2}). 
Using Eqs.~(\ref{eq:z2}), 
we show that the following theorem:
\begin{Thm}\label{thm:d^perp>12}
There is no code $C$ with $d^{\perp}\geq 12$.
\end{Thm}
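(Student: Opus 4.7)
The plan is to parallel the proof of Theorem \ref{thm:d^perp>10}, now adapted to a weight enumerator with three unknown coefficients $\alpha,\beta,\gamma$ rather than two. Using Theorem \ref{thm: macwilliams iden.} together with the vanishing conditions (\ref{eq:z2}), I would express each of $A_2,A_4,A_6,A_8,A_{10}$ as a linear combination of $1,\alpha,\beta,\gamma$. Eliminating the constant term pairwise produces the four homogeneous linear relations
\[
\binom{n}{2i}A_2 - \binom{n}{2}A_{2i}=0\qquad (i=2,3,4,5),
\]
which can be rewritten as $X_i\alpha+Y_i\beta+Z_i\gamma=0$ for $i=1,2,3,4$.

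Since $\alpha\neq 0$, the $4\times 3$ coefficient matrix must have rank at most $2$, so all four of its $3\times 3$ minors vanish. I would feed this polynomial system in $(n,d_1,d_2)$ to {\sc Mathematica}, as in \cite{miezaki} for the four-weight case, and expect a finite list of candidate triples. Those that violate $0<d_1<d_2<n/2$, or that force $n$ too small to accommodate $d^\perp\geq 12$, can be discarded at once.

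For each remaining branch, I would substitute back into the relation $1+\alpha+\beta+\gamma/2 = 2^{k-1}$ to obtain an integer equation in $n$ alone. In analogy with the four-weight case, where this step produced $\tfrac{1}{24}(24 - 18n + 23n^2 - 6n^3 + n^4)=2^k$ and was resolved by Lemma \ref{lem:eq}, I expect the generic branch to collapse to $\tfrac{1}{120}(184n - 110n^2 + 55n^3 - 10n^4 + n^5)=2^m$, the Diophantine equation handled by Lemma \ref{lem:eq2}. That lemma restricts $n$ to $\{1,2,3,4,5,6,12\}$, none of which can accommodate $d^\perp\geq 12$, so the branch is eliminated.

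The main obstacle is the combinatorial explosion: four $3\times 3$ determinantal conditions in three variables are likely to produce several sporadic branches in addition to the generic one, and each must be closed separately, either by a direct size bound or by its own Diophantine reduction to (a variant of) Lemma \ref{lem:eq2}. The symbolic system is substantially more intricate than the $2\times 2$ determinants of Theorem \ref{thm:d^perp>10}, so the {\sc Mathematica} elimination has to be steered carefully; once every branch is ruled out, the theorem follows.
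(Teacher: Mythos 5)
Your proposal is correct and follows essentially the same route as the paper: both reduce the conditions $A_2=\cdots=A_{10}=0$ to a polynomial system in $(n,d_1,d_2)$ via linear elimination of $\alpha,\beta,\gamma$, solve it by computer algebra, discard branches violating $0<d_1<d_2<n/2$, and close the surviving branches by reducing $1+\alpha+\beta+\gamma/2=2^{k-1}$ to the Diophantine equation of Lemma \ref{lem:eq2}. The only difference is organizational: you impose vanishing of the $3\times3$ minors of the homogenized system, while the paper solves for $(\alpha,\beta,\gamma)$ from three subsets of the equations and demands consistency.
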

\begin{proof}
We assume that $C$ has $d^{\perp}\geq 12$. 
Using Eqs.~(\ref{eq:z2}), we write
$\alpha,\beta$, and $\gamma$ in terms of 
$n,d_1$, and $d_2$, that is, 
\begin{align*}
\alpha=\alpha_1&=Y_{11}(n,d_1,d_2),\\
\beta=\beta_1&=Y_{12}(n,d_1,d_2),\\
\gamma=\gamma_1&=Y_{13}(n,d_1,d_2). 
\end{align*}
Similarly, 
using Eqs.~(\ref{eq:z2}), we write
$\alpha,\beta$, and $\gamma$ in terms of 
$n,d_1$, and $d_2$, that is, 
\begin{align*}
\alpha=\alpha_2&=Y_{21}(n,d_1,d_2),\\
\beta=\beta_2&=Y_{22}(n,d_1,d_2),\\
\gamma=\gamma_2&=Y_{23}(n,d_1,d_2), 
\end{align*}
and using Eqs.~(\ref{eq:z2}), we write
$\alpha,\beta$, and $\gamma$ in terms of 
$n,d_1$, and $d_2$, that is, 
\begin{align*}
\alpha=\alpha_3&=Y_{31}(n,d_1,d_2),\\
\beta=\beta_3&=Y_{32}(n,d_1,d_2),\\
\gamma=\gamma_3&=Y_{33}(n,d_1,d_2). 
\end{align*}

Using {\sc Mathematica}, we obtain the solutions of 
\[
\alpha_1=\alpha_2, \alpha_1=\alpha_3,
\beta_1=\beta_2, \beta_1=\beta_3,
\gamma_1=\gamma_2, \gamma_1=\gamma_3. 
\] 
We note that these solutions are listed on the homepage of
one of the authors \cite{miezaki}. 
Because 
$0<d_1<d_2<n/2$ and $5<n$, 
the solutions (1b)--(19b), (22b), (23b), (26b), and (27b) 
in \cite{miezaki} are impossible. 
We show that (25b) in \cite{miezaki} is impossible. 
The other cases (20b),(21b), and (24b) can be proved similarly. 

Then using the solution (25b) in \cite{miezaki} 
and Eqs.~(\ref{eq:z2}), 
using {\sc Mathematica}, we obtain 
\[
1+\alpha+\beta+\frac{\gamma}{2}
=\frac{1}{120}(184 n - 110 n^2 + 55 n^3 - 10 n^4 + n^5). 
\]
By Lemma \ref{lem:eq2}, 
\[
(n,k)=(1,0),(2,1),(3,2),(4,3),(5,4),(6,5),(12,10), 
\]
and 
it is clear that these cases 
are impossible. 
\end{proof}

\begin{proof}[Proof of Theorem \ref{thm:main1} (1)]
By Theorem \ref{thm:d^perp>12}, 
$d^\perp \leq 10$. Hence,
$(d^\perp, t) = (6, 1)$, $(8, 3)$, or $(10,5)$.
\end{proof}


\subsection{Proof of Theorem \ref{thm:main2} (2)}

In this subsection, 
we provide the proof of Theorem \ref{thm:main2} (2). 
\begin{proof}
The harmonic weight enumerator of $f\in \Harm_{t+1}$ is 
\begin{align*}
W_{C,f}
=&a_{1}x^{n-d_{1}}y^{d_{1}}+ a_{2}x^{n-d_{2}}y^{d_{2}} +bx^{\frac{n}{2}}y^{\frac{n}{2}} +c_{2}x^{d_{2}}y^{n-d_{2}} +c_{1}x^{d_{1}}y^{n-d_{1}}\\
=&(xy)^{t+1}(a_{1}x^{n-d_{1}-(t+1)}y^{d_{1}-(t+1)}+ a_{2}x^{n-d_{2}-(t+1)}y^{d_{2}-(t+1)} \\
&+bx^{\frac{n}{2}-(t+1)}y^{\frac{n}{2}-(t+1)}  +c_{2}x^{d_{2}-(t+1)}y^{n-d_{2}-(t+1)} \\
&+c_{1}x^{d_{1}-(t+1)}y^{n-d_{1}-(t+1)}), 
\end{align*}
where $a_{1},a_{2},b,c_{1},c_{2} \in \RR$. 
We set
\begin{align*}
Z_{C,f}=&a_{1}x^{n-d_{1}-(t+1)}y^{d_{1}-(t+1)}+ a_{2}x^{n-d_{2}-(t+1)}y^{d_{2}-(t+1)} \\
&+bx^{\frac{n}{2}-(t+1)}y^{\frac{n}{2}-(t+1)} \\
 &+c_{2}x^{d_{2}-(t+1)}y^{n-d_{2}-(t+1)}\\
& +c_{1}x^{d_{1}-(t+1)}y^{n-d_{1}-(t+1)}. 
\end{align*}

Then by Theorem \ref{thm: Bachoc iden.}, 
\begin{align*}
Z_{C^{\perp},f}
= &a'_{1}(x+y)^{n-d_{1}-(t+1)}(x-y)^{d_{1}-(t+1)}\\
&+ a'_{2}(x+y)^{n-d_{2}-(t+1)}(x-y)^{d_{2}-(t+1)} \\
 &+b'(x+y)^{\frac{n}{2}-(t+1)}(x-y)^{\frac{n}{2}-(t+1)} \\
 &+c'_{2}(x+y)^{d_{2}-(t+1)}(x-y)^{n-d_{2}-(t+1)}\\
& +c'_{1}(x+y)^{d_{1}-(t+1)}(x-y)^{n-d_{1}-(t+1)} \\ 
= &a'_{1}(x^{2}-y^{2})^{d_{1}-(t+1)}(x+y)^{n-2d_{1}}\\
&+ a'_{2}(x^{2}-y^{2})^{d_{2}-(t+1)}(x+y)^{n-2d_{2}} \\
 &+b'(x^{2}-y^{2})^{\frac{n}{2}-(t+1)} \\
 &+c'_{2}(x^{2}-y^{2})^{d_{2}-(t+1)}(x-y)^{n-2d_{2}} \\
&+c'_{1}(x^{2}-y^{2})^{d_{1}-(t+1)}(x-y)^{n-2d_{1}}.
\end{align*}

Since $C^{\perp}$ does not have an odd weight,
$a'_{1}-c'_{1}=0$ and $a'_{2}-c'_{2}=0$.
By $d^{\perp} \neq t+1$ and $t+3$, 
the coefficients of $x^{n-2(t+1)}$ and $x^{n-2(t+2)}y^{2}$ are zero. 
Then, 
\begin{align}
&2a'_{1}+2a'_{2}+b'=0, \label{eqn:t+1} \\
&2a'_{1} \left( d_{1}-(t+1)+ \binom{n-2d_{1}}{2} \right)+ 2a'_{2} \left( d_{2}-(t+1)+ \binom{n-2d_{2}}{2}\right) \label{eqn:t+3} \\
&+b' \left( \frac{n}{2}-(t+1) \right)=0. \notag
\end{align} 

By (\ref{eqn:t+1}) and (\ref{eqn:t+3}),  
\begin{align*}
&a'_{2}=-\frac{(n-2d_{1})(n-2d_{1}-2)}{(n-2d_{2})(n-2d_{2}-2)}a'_{1}, \\
&b'= \frac{8(d_{2}-d_{1})(n-d_{1}-d_{2}-1)}{(n-2d_{2})(n-2d_{2}-2)} a'_{1}.
\end{align*}
Then, 
\begin{align}\label{eqn:w5} 
Z_{C^{\perp},f} 
= & a'_{1} \bigg( (x^{2}-y^{2})^{d_{1}-(t+1)}(x+y)^{n-2d_{1}} \\
&-\frac{(n-2d_{1})(n-2d_{1}-2)}{(n-2d_{2})(n-2d_{2}-2)}(x^{2}-y^{2})^{d_{2}-(t+1)}(x+y)^{n-2d_{2}}\notag \\
& +\frac{8(d_{2}-d_{1})(n-d_{1}-d_{2}-1)}{(n-2d_{2})(n-2d_{2}-2)}(x^{2}-y^{2})^{\frac{n}{2}-(t+1)} \notag \\
& -\frac{(n-2d_{1})(n-2d_{1}-2)}{(n-2d_{2})(n-2d_{2}-2)}(x^{2}-y^{2})^{d_{2}-(t+1)}(x-y)^{n-2d_{2}}\notag \\
&+(x^{2}-y^{2})^{d_{1}-(t+1)}(x-y)^{n-2d_{1}} \bigg). \notag
\end{align}

Let 
\[
W_{C^\perp,f}=(xy)^{t+1}Z_{C^{\perp},f}=\sum{p_i}x^{n-i}y^i.
\]
Recall that if $p_{2w+t+1}=0$ then $D^{\perp}_{2w+t+1}$ is a $(t+1)$-design. 
By (\ref{eqn:w5}),  
\begin{align*}
p_{2w+t+1}= & {\rm (constant)}\\
\times &
\Bigg(\sum_{i=0}^{w}  (-1)^{w-i} \binom{d_{1}-(t+1)}{w-i}\binom{n-2d_{1}}{2i} \\
&-\frac{(n-2d_{1})(n-2d_{1}-2)}{(n-2d_{2})(n-2d_{2}-2)} \sum_{j=0}^{w}  (-1)^{w-j} \binom{d_{2}-(t+1)}{w-j}\binom{n-2d_{2}}{2j} \\
&+\frac{8(d_{2}-d_{1})(n-d_{1}-d_{2}-1)}{(n-2d_{2})(n-2d_{2}-2)} (-1)^{w+1} \binom{n/2-(t+1)}{w} \Bigg).
\end{align*}
By Theorem \ref{thm:design}, 
if the equation 
\begin{align*}
 &\sum_{i=0}^{w}  (-1)^{w-i} \binom{d_{1}-(t+1)}{w-i}\binom{n-2d_{1}}{2i} \\
&-\frac{(n-2d_{1})(n-2d_{1}-2)}{(n-2d_{2})(n-2d_{2}-2)} \sum_{j=0}^{w}  (-1)^{w-j} \binom{d_{2}-(t+1)}{w-j}\binom{n-2d_{2}}{2j} \\
&+\frac{8(d_{2}-d_{1})(n-d_{1}-d_{2}-1)}{(n-2d_{2})(n-2d_{2}-2)} (-1)^{w+1} \binom{n/2-(t+1)}{w} 
=0, 
\end{align*}
then $D^{\perp}_{2w+t+1}$ is a $(t+1)$-design. 
\end{proof}

\section{Concluding Remarks}\label{sec:rem}

\begin{Rem}
\begin{enumerate}

\item [(1)]


Are there examples that satisfy the 
condition of Theorem \ref{thm:main2}?

\item [(2)]

For the case $d^{\perp}-t= 4$, 
if we assume that $d^\perp\geq 10$ and 
\begin{align*}
W_{C} (x,y)&=x^{n}+\alpha x^{n-d_{1}}y^{d_{1}}+\beta x^{n-d_{2}}y^{d_{2}}\\
&+\beta x^{d_{2}}y^{n-d_{2}}+\alpha x^{d_{1}}y^{n-d_{1}}
+y^{n}, 
\end{align*}
then we have 
\[
1+\alpha+\beta=\sum_{i=0}^4\binom{n-1}{i}. 
\]
Similarly, 
for the case $d^{\perp}-t= 5$, 
if we assume that $d^\perp\geq 12$ and 
\begin{align*}
W_{C} (x,y)&=x^{n}+\alpha x^{n-d_1}y^{d_1}+\beta x^{n-d_2}y^{d_2}
+\gamma x^{\frac{n}{2}}y^{\frac{n}{2}}\\
&+\beta x^{d_2}y^{n-d_2}+\alpha x^{d_1}y^{n-d_1}+y^{n}, 
\end{align*}
then we have 
\[
1+\alpha+\beta+\frac{\gamma}{2}=\sum_{i=0}^5\binom{n-1}{i}. 
\]
This suggests the following conjecture: 
\begin{Conj}
Let $C$ be a binary antipodal $[n,k]$ code and 
\[
W_{C} (x,y)=x^{n}+\sum_{i\geq 1}\alpha_i x^{n-d_i}y^{d_i}+y^n, 
\]
be the weight enumerator of $C$.
We assume that $C$ satisfies the AM-condition with $d^{\perp}-t=\ell$ ($\ell \geq 6$).
If we assume that $d^\perp\geq 2\ell+2$, 

then we have 
\begin{align*}
\begin{cases}
\displaystyle 
1+\alpha_1+\cdots+\alpha_{\ell/2}=\sum_{i=0}^\ell\binom{n-1}{i}
=2^{k-1} \ 
({\rm if }\ \ell\equiv 0\pmod{2}), \\
\displaystyle 
1+\alpha_1+\cdots+\frac{\alpha_{\lceil\ell/2\rceil}}{2}=\sum_{i=0}^\ell\binom{n-1}{i}
=2^{k-1} \ 
({\rm if }\ \ell\equiv 1\pmod{2}).
\end{cases}
\end{align*}
Moreover, 
we have $t \leq \ell$. 
\end{Conj}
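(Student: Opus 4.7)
The plan is to adapt the three-step template of the $\ell=4,5$ arguments in Sections~\ref{sec:4}--\ref{sec:5} to general $\ell\ge 6$. First, the left equality $1+\alpha_1+\cdots+\alpha_{\lfloor\ell/2\rfloor}+(\alpha_{\lceil\ell/2\rceil}/2)_{\ell\text{ odd}}=2^{k-1}$ is immediate from the antipodal involution $c\mapsto c+\mathbf{1}$ on $C$: it is fixed-point-free, so its $|C|/2=2^{k-1}$ orbits consist of one orbit $\{\mathbf{0},\mathbf{1}\}$, plus $\alpha_i$ orbits from each weight pair $\{d_i,n-d_i\}$ with $d_i<n/2$, plus (if $\ell$ is odd) $\alpha_{\lceil\ell/2\rceil}/2$ orbits within the weight-$n/2$ class, matching the stated sum.

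Second, for the middle equality $2^{k-1}=\sum_{i=0}^\ell\binom{n-1}{i}$, the MacWilliams identity combined with $\mathbf{1}\in C$ (which forces $C^\perp$ to have only even weights) and the hypothesis $d^\perp\ge 2\ell+2$ is equivalent to
\[
W_C(1+y,1-y)\equiv 2^k\pmod{y^{2\ell+2}}.
\]
Expanding the left-hand side using $(1+y)^{n-w}(1-y)^w=(1-y^2)^w(1+y)^{n-2w}$ (for $w\le n/2$) and invoking antipodality (which kills odd-power coefficients automatically), one obtains $\ell$ linear equations in the $\lceil\ell/2\rceil$ unknowns $\alpha_1,\dots,\alpha_{\lceil\ell/2\rceil}$. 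One uses $\lceil\ell/2\rceil$ of these to solve for the $\alpha_i$ (generalizing the explicit rational-function solutions $Y_{ij}(n,d_1,d_2)$ in the proofs of Theorems~\ref{thm:d^perp>10} and \ref{thm:d^perp>12}) and treats the remaining $\lfloor\ell/2\rfloor$ as compatibility conditions. Summing $1+\sum\alpha_i+(\alpha_{\lceil\ell/2\rceil}/2)$ and simplifying then yields $\sum_{i=0}^\ell\binom{n-1}{i}$, with the $d_i$-dependence cancelling.

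Third, for $t\le\ell$, one argues by contradiction: if $d^\perp\ge 2\ell+2$, the middle equality is a polynomial Diophantine equation $\sum_{i=0}^\ell\binom{n-1}{i}=2^{k-1}$ in $n$, generalizing Lemmas~\ref{lem:eq} and \ref{lem:eq2}. One must show that for $\ell\ge 6$ this has no solution $(n,k)$ compatible with the AM-structural constraints ($0<d_1<\cdots<d_{\lceil\ell/2\rceil}\le n/2$, $d\ge t+1$), forcing $d^\perp\le 2\ell$, equivalently $t\le\ell$.

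The main obstacle is this final Diophantine step. Magma's \texttt{IntegerQuarticPoints} handles only quartics, so even the $\ell=5$ case required an ad hoc modular argument (splitting $n+1$ by divisibility and reducing $\bmod\,2^7$), and these techniques do not scale cleanly to $\ell\ge 6$. A more conceptual alternative is the following perfect-code reformulation: the middle equality is equivalent to $|C^\perp|\cdot\sum_{i=0}^\ell\binom{n-1}{i}=2^{n-1}$, which says that the punctured dual $\pi(C^\perp)$ attains the Hamming sphere-packing bound in $\mathbb{F}_2^{n-1}$, i.e., is a perfect $\ell$-error-correcting code of length $n-1$. Here one uses that $d^\perp\ge 2\ell+2\ge 14$ implies no weight-$1$ codeword in $C^\perp$ (so puncturing is injective), and the Assmus--Mattson $1$-design property on the minimum-weight codewords of $C^\perp$ implies the punctured code has minimum distance exactly $2\ell+1$. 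By the van Lint--Tiet\"av\"ainen classification of binary perfect codes (Hamming $\ell=1$, Golay $(n-1,\ell)=(23,3)$, repetition, trivial), no such code exists for $\ell\ge 4$ other than Golay, giving the contradiction for all $\ell\ge 6$.
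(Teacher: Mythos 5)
The paper offers no proof of this statement: it appears in the concluding remarks as a Conjecture, and the authors explicitly write ``To date, we do not have a proof of this conjecture,'' so your proposal must stand entirely on its own. Your Step 1 (the equality with $2^{k-1}$ via the fixed-point-free involution $c\mapsto c+\mathbf{1}_n$) is correct but is the trivial part, recorded by the authors as a one-line remark in the $\ell=4,5$ cases. The genuine gap is Step 2: you set up the $\ell$ linear conditions $A_2=\cdots=A_{2\ell}=0$ in the $\lceil \ell/2\rceil$ unknowns $\alpha_i$ and then simply assert that solving and summing ``yields $\sum_{i=0}^{\ell}\binom{n-1}{i}$, with the $d_i$-dependence cancelling.'' That cancellation \emph{is} the conjectured identity; for $\ell=4,5$ the authors could only verify it by explicit {\sc Mathematica}/{\sc Magma} computation, and you supply no mechanism valid for all $\ell\geq 6$. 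Since your Step 3 derives its contradiction (and hence $t\leq\ell$) from the middle equality, the whole argument rests on an unproven claim.

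Your perfect-code reformulation is the most promising ingredient, but you deploy it in the wrong direction: you deduce perfection of $\pi_j(C^{\perp})$ \emph{from} the unproven middle equality. The reformulation could instead be used to \emph{prove} Step 2. The dual of $\pi_j(C^{\perp})$ is $C$ shortened at $j$ (legitimate because $d^{\perp}\geq 2\ell+2>1$), and this shortened code has at most $\ell$ nonzero weights, since the AM-condition together with $t<d$ and antipodality forces $C$ to have exactly $\ell+1$ nonzero weights, one of which is $n$ and hence is unattainable by a codeword vanishing at $j$. Delsarte's external-distance bound then gives covering radius $\rho(\pi_j(C^{\perp}))\leq\ell$, while $d(\pi_j(C^{\perp}))\geq 2\ell+1$ gives packing radius $\geq\ell$; hence the punctured code is perfect and the sphere-packing bound holds with equality, which is exactly $2^{k-1}=\sum_{i=0}^{\ell}\binom{n-1}{i}$. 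If you make that argument explicit, Step 3 becomes sound --- but you must then actually exclude the remaining perfect codes permitted by the van Lint--Tiet\"av\"ainen classification for $\ell\geq 6$, in particular the repetition code of length $2\ell+1$ (which forces $C$ to be the even-weight code of length $2\ell+2$, incompatible with the Assmus--Mattson requirement $t<d=2$) and the trivial cases; your sketch lists these but does not rule them out.
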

To date, we do not have a proof of this conjecture. 






\end{enumerate}

\end{Rem}


\section*{Acknowledgments}

The authors would also like to thank the anonymous reviewers 
for their beneficial comments on an earlier version of the manuscript. 
The second named author was supported by JSPS KAKENHI (22K03277). 
We thank Maxine Garcia, PhD, from Edanz (https://jp.edanz.com/ac) for editing a draft of this manuscript.




\begin{thebibliography}{999}


\bibitem{mathoverflow}
M.~Alekseyev, 
When do binomial coefficients sum to a power of 2?, 
mathoverflow, \\
\url{https://mathoverflow.net/questions/412940/}


\bibitem{assmus-mattson}
E.F.~Assmus~Jr. and  H.F.~Mattson~Jr., 
New $5$-designs, 
{\em J.~Combin.~Theory}
  {\bf 6} (1969), 122--151.


\bibitem{Bachoc}
C.~Bachoc,
On harmonic weight enumerators of binary codes,
 {\em Des.~Codes Cryptogr.} {\bf 18} (1999), no.~1-3, 11--28. 


\bibitem{Bannai-Koike-Shinohara-Tagami}
E.~Bannai, M.~Koike, M.~Shinohara, and M.~Tagami, 
Spherical designs attached to extremal lattices and the modulo p property of 
Fourier coefficients of extremal modular forms, 
{\em Mosc.~Math.~J.}
 {\bf 6} (2006), 225--264.


\bibitem{BM1}
E.~Bannai and T.~Miezaki,
Toy models for D. H. Lehmer's conjecture.~
{\sl J.~Math.~Soc.~Japan} {\bf 62} (2010), no.~3, 687--705.

\bibitem{BM2}
E.~Bannai and T.~Miezaki, 
Toy models for D. H. Lehmer's conjecture II.~
{\sl Quadratic and higher degree forms}, 1--27, 
Dev.~Math., {\bf 31}, Springer, New York, 2013.

\bibitem{BMY}
E.~Bannai, T.~Miezaki, and V.A.~Yudin, 
An elementary approach to toy models for Lehmer's conjecture. (Russian) 
{\sl Izv.~Ross.~Akad.~Nauk Ser.~Mat.~}{\bf 75} (2011), no.~6, 3--16; 
{\sl translation in Izv.~Math.~}{\bf 75} (2011), no.~6, 1093--1106.








\bibitem{mac}
J.~Macwilliams, 
A theorem on the distribution of weights in a systematic code, 
{\em Bell System Tech.~J.}
 {\bf 42} (1963), 79--84. 


\bibitem{Magma}
W.~Bosma, J.~Cannon, and C.~Playoust, 
 The Magma algebra system. I. The user language, 
{\em J. Symbolic Comput.}
 {\bf 24} (1997),  235--265.




\bibitem{Delsarte}
P.~Delsarte, 
{Hahn polynomials, discrete harmonics, and $t$-designs}, 
{\sl SIAM J.~Appl.~Math.} {\bf 34} (1978), no.~1, 157--166. 


\bibitem{extremal design H-M-N}
N.~Horiguchi, T.~Miezaki, and H.~Nakasora,
On the support designs of extremal binary doubly even self-dual codes, 
\emph{Des.~Codes Cryptogr.}, 
{\bf 72} (2014), 529--537.


\bibitem{Lehmer}
D.H.~Lehmer, 
{The vanishing of Ramanujan's $\tau (n)$}, 
{\sl Duke Math.~J.}
{\bf 14} (1947), 429--433. 


\bibitem{Miezaki}
T.~Miezaki, 
{Conformal designs and D.H. Lehmer's conjecture}, 
{\sl J.~Algebra} {\bf 374} (2013), 59--65.

\bibitem{Miezaki2}
T.~Miezaki, 
{Design-theoretic analogies between codes, lattices, and vertex operator algebras.}, 
{\sl Des.~Codes Cryptogr.} {\bf 89} (2021), no.~5, 763--780. 


\bibitem{miezaki}
T.~Miezaki,\\
Tsuyoshi Miezaki's website: 
\url{http://www.f.waseda.jp/miezaki/data.html}


\bibitem{MMN}
T.~Miezaki, A.~Munemasa, and H.~Nakasora,
A note on Assmus--Mattson type theorems,
\emph{Des. Codes Cryptogr.}, 
{\bf 89} (2021), 843--858.

\bibitem{extremal design2 M-N}
T.~Miezaki and H.~Nakasora,
An upper bound of the value of $t$ of the support $t$-designs of extremal binary doubly even 
self-dual codes, 
\emph{Des.~Codes Cryptogr.}, 
{\bf 79} (2016), 37--46.

\bibitem{MN-tec}
T.~Miezaki and H.~Nakasora,
The support designs of the triply even binary codes of length $48$,
\emph{J.~Combin.~Designs}, 
{\bf 27} (2019), 673--681.

\bibitem{dual support designs}
T.~Miezaki and H.~Nakasora,
A note on the Assmus--Mattson theorem for some binary codes, 
\emph{Des. Codes Cryptogr.}, 
{\bf 90} (2022), no.~6, 1485--1502. 

\bibitem{mn-typeI}
T.~Miezaki and H.~Nakasora,
On the Assmus--Mattson type theorem 
for Type I and even formally self-dual codes, submitted. 


\bibitem{Venkov}
{B.B.~Venkov, Even unimodular extremal lattices} (Russian), 
{\sl Algebraic geometry and its applications. Trudy Mat. Inst. Steklov.} 
{\bf 165} (1984), 43--48; 
translation in 
{\sl Proc.~Steklov Inst.~Math.} 
{\bf 165} (1985) 47--52. 

\bibitem{Venkov2}
B.B.~Venkov, R\'eseaux et designs sph\'eriques, (French) [Lattices and spherical designs] {\sl R\'eseaux euclidiens, designs sph\'eriques et formes modulaires}, 10--86, Monogr.~Enseign.~Math., 37, Enseignement Math., Geneva, 2001.


\bibitem{Mathematica}
Wolfram Research, Inc., Mathematica, Version 11.2, Champaign, IL (2017).




\end{thebibliography}
\end{document}